\newtheorem{theorem}{Theorem}[section]
\newtheorem{corollary}{Corollary}[theorem]
\newtheorem{lemma}[theorem]{Lemma}
\newtheorem{proposition}{Proposition}
\newtheorem{remark}{Remark}
\newtheorem{definition}{Definition}
\def\BibTeX{{\rm B\kern-.02em{\sci\kern-.010em b}\kern-.01em
    T\kern-.1667em\lower.7ex\hbox{E}\kern-.110emX}}
    \newenvironment{proof}[1][]{\noindent {\bf Proof #1:\;}}{\hfill $\Box$}
\begin{document}

\title{Intrinsic Derivation of the  Equations of a Snake Robot based on a Cosserat Beam Model\thanks{This is the report of an M2 internship  held by the first author at Universit\'e Paris Saclay} \\
}

\author{Anis Bousclet\and Fr\'ed\'eric Boyer\thanks{IMT-Atlantique, France, frederic.boyer@imt-atlantique.fr}\and Yacine Chitour\thanks{L2S, Universit\'e Paris Saclay and CentraleSup\'elec, France, yacine.chitour@l2s.centralesupelec.fr}\and Swann Marx\thanks{LS2N, ECN \& CNRS, France, swann.marx@ls2n.fr}}

\maketitle
\tableofcontents

\begin{abstract}
In this paper, we present an intrinsic derivation of the equations ruling the dynamics motion of a snake robot dynamics. Based on a Cosserat beam model, we first show that the extended configuration space is a Lie group. Endowing it with an appropriate left invariant metric, the corresponding  Euler-Poincar\'e equations can be reduced to a system of hyperbolic PDEs  in the Lie algebra $\mathfrak{se}(3)$. We also provide the constitutive law describing the actuation in this system of PDEs. 
\end{abstract}

{\bf{Keywords}}. Soft Robotics, Cosserat Medium, Lie Group, Partial Differential Equations, Linear Elasticity.

\section{Introduction}
Until the beginning of the last century, robotics was all about producing rigid systems. The credo of classical robotics was "the stiffer, the better". In this context, the model used was that of rigid multi-body systems whose dynamic equations are trivially given by Lagrange's equations on the configuration space of the robot's joint coordinates. Quickly produced by the robotics community, these equations are ODEs or at most DAEs, whose numerical resolution and exploitation for the synthesis of dynamic control laws are now well known. In the early 2000s, robotics underwent a paradigm shift. The inadequacy of our machines compared to the performance of even the simplest-looking animals led to the emergence of a new robotics approach, inspired by the living world. Contrary to the "stiffer is better" approach of classical robotics, the idea behind this revolution is to design robots with deformable bodies, capable of interacting mechanically in an "optimal" way with their environment. In this new kind of robotics, commonly referred to as continuous or soft robotics, snakes are the emblematic animals for the performance sought, and one of the most inspiring models for roboticists. Without any lateral limbs, these animals are able to control their deformation and internal stresses so as to move in virtually any environment, including the most challenging, such as the constrained environments of the canopy or the soft terrain of deserts. From this point of view, it is vital to study these animals and their robotic artifacts in order to design new mechanical architectures and their associated control laws. Given their prohibitive number of degrees of freedom, models based on the mechanics of continuous media undergoing large transformations (displacement + deformations) are the obvious alternative to the discrete models of rigid robotics. Based on this idea, the Cosserat beam model was introduced to the field of robotics in \cite{BPK-2006} (see also \cite{Rucker-2011,RBDS-2018} and references therein) and has gradually established itself as one of the dominant modeling paradigms for continuous robotics. In the Cosserat model, a beam, or snake, is seen as a continuous stack of infinitely many rigid sections (vertebrae) whose pose (orientation-position) is naturally described by a transformation of the Lie group $SO(3)\times \mathbb{R}^3$ or 
$\mathfrak{se}(3)$). 

Taking up the work of the Cosserat brothers \cite{Cosserat}, the idea is to take advantage of the intrinsic formulations of geometrical mechanics to relieve the snakes' dynamic model of extrinsic nonlinearities due to the parameterization of their vertebral poses. This idea was deployed in \cite{BR-2016} where the snake robot is seen as a continuum distribution of rigid bodies, and the configuration space is furnished with a Lagrangian. erive the PDEs. The Euler-Poincar\'e reduction was adapted to these systems to produce a set of PDEs called Poincar\'e-Cosserat equations. Today, this modeling paradigm is being exploited for non-invasive surgical robotics via numerical modeling and simulation techniques adapted to the needs of this community (see \cite{BLCR-2021, BLCRA-2023} and references therein). 

Nevertheless, the objectives of this work remain pragmatic, and the use of matrix groups and the numerous isomorphisms linking their algebra to the Euclidean space $\mathbb{R}^3$, hides a more intrinsic formulation of these equations, which control theory could take advantage of to discover the secrets of snake performance and reproduce them on the continuous robots of our laboratories. It is the aim of this note to produce such a formulation.

Recall that Cosserat mediums were introduced in \cite{Cosserat} and the idea is to consider additional degree of freedom that describe the evolution of microstructure, here we adopt the geometrical formulation of Cosserat medium exposed in \cite{Epstein}.

The structure of the note goes as follows. In section 2, we begin by introducing the state space that is called in mechanics phase space and which is derived from the configuration space.  We will actually consider a slightly bigger space that can be identified to a group where we can translate equations written in a nonlinear space onto an euclidean space. In section 3, 
we introduce the Lagrangian in order to derive the equations of motion, as the difference between the kinetic energy, which induces a Riemannian structure on the group, and a potential energy, which encodes the elasticity of the vertebrate of the snake. In section 4, we first apply  the principle of least action to obtain a variational formulation of the motion of the snake and in section 5, we explain how to complement them in order to derive the controlled equations. 

In the appendix we give intuitive explanation of the insufficiency of classical medium, and the formula of kinetic energy of a Cosserat medium, and we recall some elementary notions in algebra and differential geometry.

\section{The configuration space}
We next provide a rigorous definition of a Cosserat medium that models the motion of snake robot such that sections have rigid body motion, the reader can see in Appendix why the classical approach fails.  
\begin{definition}
A Kinematic Cosserat medium is modeled by a trivial principal bundle $F\mathcal{L}=[0,1]\times \mathrm{SO}(3)$ where $\mathcal{L}=[0,1]$ is the basis, $\pi:F\mathcal{L}\rightarrow{\mathcal{L}}$ is the projection $\pi(z,r)=z$.
\end{definition}
We call $Emb_{c}(\mathcal{L})$ the set of all embedding of $\mathcal{L}$ in $\mathbb{R}^{3}$ into the Cosserat sens, this require the existence of an embedding of $\mathcal{L}$, and of a map between the two principal bundles that is right invariant.
\begin{definition}
A configuration of the Cosserat medium is a map $P:F\mathcal{L}\rightarrow{F\mathbb{R}^{3}}$ such that there exist an embedding $p:\mathcal{L}\rightarrow{\mathbb{R}^{3}}$ with the property $\pi_{\mathbb{R}^{3}}\circ P=p\circ\pi_{\mathcal{B}}$, we denote $P(z,r)=(p(z),Q_{z}(r))$.

We suppose also that $P(z,qr)=(p(z),Q_{z}(q)r)$ for all $(z,q),(z,r)\in F\mathcal{L}$.
\end{definition}
Poincar\'e has observed in [6] that if there exists a Lie group that acts transitively on the configuration space, we obtain simpler equations, so the idea is to let a Lie group act on $Emb_{c}(\mathcal{L})$.
We consider a reference configuration $P_{0}$, for each element $\mathbf{g}\in G=C^{\infty}(\mathcal{L},\mathrm{SE}(3))$ we get another "configuration" $\mathbf{g}*P_{0}$, the point is that if we don't deform so much the reference configuration, we will get a new configuration, but if we deform very much, we can get a "configuration" such that $p$ is not an embedding.

This suggests to work on the space of deformations that allow the snake to self cut himself. 
\begin{definition}
The space $M=C^{\infty}(F\mathcal{L},F\mathbb{R}^{3})$ is the set of maps $P:F\mathcal{L}\rightarrow{\mathbb{R}^{3}}$ such that there exists a smooth map $p:\mathcal{L}\rightarrow{\mathbb{R}^{3}}$ and satisfies $\pi\circ P=p\circ\pi$ and $P(z,qr)=(p(z),Q_{z}(q)r)$. 
\end{definition}
Before going further, let us see the structure of $G$ and some properties of this Lie group.
\begin{definition}
The group $G=C^{\infty}(\mathcal{L},\mathrm{SE}(3))$ is a Lie group of infinite dimension, an element of $G$ is a smooth function $g:[0,1]\rightarrow{SE(3)}$, we denote it by 
$\mathbf{g}=g(.)$.  
the neutral element of the group is the constant function $e_{G}(z)=\begin{pmatrix}I_{3}&0\\0 &1\end{pmatrix}$, the multiplication and inversion is done by fixing $z\in\mathcal{L}$ $$\mathbf{g_{1}g_{2}}(z)=g_{1}(z).g_{2}(z),$$
and $$\mathbf{g}^{-1}(z)=(g(z))^{-1}.$$
The Lie algebra is $\mathfrak{G}=C^{\infty}(\mathcal{L},\mathfrak{se}(3))$ and 
the tangent space at $\mathbf{g}\in G$ is given by 
$$
T_{\mathbf{g}}G=\left\{\delta \mathbf{g}\in C^{\infty}(\mathcal{L},TSE(3)) |\forall{z\in\mathcal{L}}, \delta\mathbf{g}(z)\in T_{\mathbf{g}(z)}SE(3)\right\}.
$$

The adjoint representation of $G$ is 
$$Ad_{\mathbf{g}}(\mathbf{V})(z)=Ad_{\mathbf{g}(z)}(\mathbf{V}(z)),$$
$\forall{\mathbf{g}\in G},\forall{\mathbf{V}\in\mathfrak{G}}$, and the adjoint representation of $\mathfrak{g}$ is 
$$ad_{\mathbf{V}}\mathbf{W}(z)=[\mathbf{V}(z),\mathbf{W}(z)].$$
When we have an evolution $g:(t,z)\in\mathbb{R}^{+}\times[0,1]\rightarrow{g(t,z)\in SE(3)}$, for all $t\geq 0$ we design by $\mathbf{g}(t)$ the element of $G$ such that $\mathbf{g}(t)(z)=g(t,z)$. 

\end{definition}

\begin{proposition}
The right action $\Phi: M\times G\rightarrow{M}$ defined by 
\begin{equation}\label{eq:grpact}
\Phi(P,\mathbf g)(z,r)=(\mathbf{g}(z)^{-1}(p(z)),Q_{z}(R_{\mathbf{g}(z)}.r)),\end{equation}
is transitive and free. 
\end{proposition}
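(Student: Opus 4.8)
The plan is to unpack the two layers of structure in $M$ and reduce the statement to an elementary computation in $\mathrm{SE}(3)$ carried out pointwise in $z\in\mathcal L$. First I would record the identification underlying everything: an element $P\in M$ is completely determined by the pair $(p,\bar Q)$, where $p:\mathcal L\to\mathbb R^{3}$ is the base map and $\bar Q:\mathcal L\to\mathrm{SO}(3)$ is defined by $\bar Q(z):=Q_{z}(I_{3})$. Indeed, the equivariance relation $P(z,qr)=(p(z),Q_{z}(q)r)$ with $q=I_{3}$ forces $Q_{z}(r)=\bar Q(z)\,r$, and conversely every smooth pair $(p,\bar Q)$ yields such a $P$, so $M\cong C^{\infty}(\mathcal L,\mathbb R^{3}\times\mathrm{SO}(3))$ as sets. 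Using associativity in $\mathrm{SO}(3)$ and the fact that the rotational part defines a group morphism $R_{(\cdot)}:\mathrm{SE}(3)\to\mathrm{SO}(3)$, one checks that in these coordinates
$$\Phi\big((p,\bar Q),\mathbf g\big)(z)=\big(\mathbf g(z)^{-1}\!\cdot p(z),\;\bar Q(z)\,R_{\mathbf g(z)}\big),$$
that $\Phi(P,e_{G})=P$, and that $\Phi(\Phi(P,\mathbf g_{1}),\mathbf g_{2})=\Phi(P,\mathbf g_{1}\mathbf g_{2})$; in particular $\Phi$ does land in $M$ and is a genuine right action, as the statement presumes.

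For transitivity I would fix $P_{0}=(p_{0},\bar Q_{0})$ and $P_{1}=(p_{1},\bar Q_{1})$ in $M$ and exhibit $\mathbf g\in G$ with $\Phi(P_{0},\mathbf g)=P_{1}$: writing $\mathbf g(z)=(R_{\mathbf g(z)},u(z))$ and equating the two components gives the pointwise system $\bar Q_{0}(z)R_{\mathbf g(z)}=\bar Q_{1}(z)$ and $R_{\mathbf g(z)}^{-1}\big(p_{0}(z)-u(z)\big)=p_{1}(z)$, whose unique solution is $R_{\mathbf g(z)}=\bar Q_{0}(z)^{-1}\bar Q_{1}(z)$ and $u(z)=p_{0}(z)-\bar Q_{0}(z)^{-1}\bar Q_{1}(z)\,p_{1}(z)$. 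For freeness I would run the same computation with $P_{0}=P_{1}=P$: then $\bar Q(z)R_{\mathbf g(z)}=\bar Q(z)$ forces $R_{\mathbf g(z)}=I_{3}$ (as $\bar Q(z)$ is invertible) and then $p(z)-u(z)=p(z)$ forces $u(z)=0$, so $\mathbf g=e_{G}$ and every stabiliser is trivial; equivalently, the displayed solution shows the connecting element is unique, which together with transitivity is exactly freeness.

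The only point that genuinely needs attention — and the only place the infinite dimensionality of $G$ could interfere — is that the candidate $\mathbf g$ produced above must actually belong to $G=C^{\infty}(\mathcal L,\mathrm{SE}(3))$ and not merely be a set-theoretic section. This is immediate: $\bar Q_{0},\bar Q_{1},p_{0},p_{1}$ are smooth in $z$ and the group operations of $\mathrm{SE}(3)$ and its affine action on $\mathbb R^{3}$ are smooth, so $z\mapsto\big(\bar Q_{0}(z)^{-1}\bar Q_{1}(z),\,p_{0}(z)-\bar Q_{0}(z)^{-1}\bar Q_{1}(z)p_{1}(z)\big)$ is a smooth curve in $\mathrm{SE}(3)$, and no analytical difficulty survives. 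Consequently I expect the main — rather mild — obstacle to be bookkeeping: staying consistent about the convention for the affine action, $\mathbf g(z)^{-1}\!\cdot p(z)=R_{\mathbf g(z)}^{-1}\big(p(z)-u(z)\big)$, and about which factor $R_{\mathbf g(z)}$ names, so that the right-action identity $\Phi(\Phi(\cdot,\mathbf g_{1}),\mathbf g_{2})=\Phi(\cdot,\mathbf g_{1}\mathbf g_{2})$ comes out with the correct ordering.
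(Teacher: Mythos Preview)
Your argument is correct and follows essentially the same line as the paper's own proof: both proceed pointwise in $z$, determining $R_{\mathbf g(z)}$ from the frame component and then the translation from the base component. Your version is more explicit---you spell out the identification $M\cong C^{\infty}(\mathcal L,\mathbb R^{3}\times\mathrm{SO}(3))$ via $\bar Q(z)=Q_{z}(I_{3})$ and check smoothness of the constructed $\mathbf g$---whereas the paper's proof is a two-line sketch that leaves these points implicit.
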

\begin{proof}
We show only that the action is free and transitive.
Let $\mathbf{g}\in G$ such that $\Phi(P,\mathbf{g})=P$, so $\mathbf{g}(z)^{-1}(p(z))=p(z)$ and $Q(R_{\mathbf{g}(z)}r)=Q(r)$, this implies that $\mathbf{g}(z)=I_{4}=\mathbf{e}_{G}$. 
Let $P_{1},P_{2}\in M$, we define $R_{\mathbf{g}}$ such that $Q^{1}_{z}(R_{\mathbf{g}(z)}r)=Q^{2}_{z}(r)$, and we deduce the translation part from $\mathbf{g}^{-1}(z)(p_{1}(z))=p_{2}(z)$.
\end{proof}

\vspace{2mm}

We now explicit the process that allow us to work on $G$ instead of $M$. 

We first, fix a reference configuration $p_{0}\in M$ that models our snake robot at the initial time.
Consider an evolution of the snake robot that is modeled by a curve $P(t)\in M$, using the fact that the action is free and transitive, the mapping $$\Phi(p_{0},.):\mathbf{g}\in G\rightarrow{\Phi(p_{0},\mathbf{g})\in M},$$ is a diffeomorphism, so for a curve $P(t)\in M$ we get a curve $\mathbf{g}(t)\in G$ that satisfies \begin{equation}\label{eq:reduction}
    \Phi(p_{0},\mathbf{g}(t))=P(t).\end{equation} \\

\section{Riemannian structure}
We consider now a Riemannian structure on the Lie group $G$ that is inherited from the kinetic energy, cf. \cite{Arnold1,Arnold2}. We refer the reader to the appendix for an intuitive explanation of the formula that gives the kinetic energy for Cosserat medium and the original paper of Cosserat medium \cite{Cosserat}.
\begin{definition}
A dynamical Cosserat medium is a kinematic Cosserat medium $\mathcal{L}$ furnished with a mass measure $m:\mathcal{L}\rightarrow{\mathbb{R}^{*}_{+}}$ and a a positive definite inertia operator $I_{z}:\mathbb{R}^{3}\rightarrow{\mathbb{R}^{3}}$ for each $z\in\mathcal{L}$ i.e $\forall{v\neq 0}$ $(v,I_{z}(v))>0$.
\end{definition}
An intuitive explanation of this definition is given in the appendix, $m$ is the mass measure of the vertebrate of the snake, and $I$ are the inertia tensor of the rigid sections.\\ \\
In order to compute the Riemannian metric we need a reference configuration $P_{0}\in M$.\\
The Riemannian metric is explicitly given by :
\begin{proposition}
For every $\mathbf{g}\in G$ the bilinear form at each tangent space $T_{\mathbf{g}}G$ 
\begin{equation}\label{eq:riemetric}
    \ll\delta \mathbf{g_{1}},\delta \mathbf{g_{2}}\gg_{g}=\int_{\mathcal{L}}m_{z}(\theta_{L}(\delta \mathbf{g_{1}})(z)(p_{0}(z)),\theta_{L}(\delta \mathbf{g_{2}})(z)(p_{0}(z)))+(\omega_{\theta_{L}(\delta \mathbf{g_{1}})(z)},I_{z}(\omega_{\theta_{L}(\delta \mathbf{g_{2}})(z)}))dz,
\end{equation} for each $\delta \mathbf{g_{1}},\delta \mathbf{g_{2}}\in T_{\mathbf{g}}G$ is a Riemannian metric on $G$, and it is left invariant that is
$$\ll\delta \mathbf{g_{1}},\delta \mathbf{g_{2}}\gg_{\mathbf{g}}=\ll\theta_{L}(\delta \mathbf{g_{1}}),\theta_{L}(\delta \mathbf{g_{2}})\gg_{\mathbf{e}}.$$
\end{proposition}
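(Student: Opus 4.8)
The plan is to verify the two assertions in turn: first that \eqref{eq:riemetric} defines a genuine Riemannian metric (i.e.\ a smoothly varying, symmetric, positive-definite bilinear form on each tangent space), and then that it is left invariant. Throughout, $\theta_L$ denotes the left Maurer--Cartan form on $G$, so that $\theta_L(\delta\mathbf g)=T L_{\mathbf g^{-1}}\delta\mathbf g\in\mathfrak G=C^\infty(\mathcal L,\mathfrak{se}(3))$ pointwise in $z$, and for $\xi\in\mathfrak{se}(3)$ we write $\xi(p_0(z))$ for the infinitesimal action of $\xi$ at the point $p_0(z)\in\mathbb R^3$ and $\omega_\xi\in\mathbb R^3$ for its rotational part. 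The first step is bookkeeping: since $SE(3)$ acts on $\mathbb R^3$ affinely, the map $\xi\mapsto(\xi(p_0(z)),\omega_\xi)$ is linear in $\xi$, hence \eqref{eq:riemetric} is bilinear in $(\delta\mathbf g_1,\delta\mathbf g_2)$; symmetry is inherited from the symmetry of $m_z(\cdot,\cdot)$ (the Euclidean inner product scaled by the mass density) and of the inertia operator $I_z$.

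The heart of the matter is positive-definiteness. Fix $\mathbf g\in G$ and suppose $\ll\delta\mathbf g,\delta\mathbf g\gg_{\mathbf g}=0$. Writing $\xi(z):=\theta_L(\delta\mathbf g)(z)\in\mathfrak{se}(3)$, the integrand is a sum of two nonnegative terms, so both vanish for almost every $z$, and by continuity for every $z$: $m_z\,\|\xi(z)(p_0(z))\|^2=0$ and $(\omega_{\xi(z)},I_z\,\omega_{\xi(z)})=0$. Since $m_z>0$ and $I_z$ is positive definite by the definition of a dynamical Cosserat medium, this forces $\omega_{\xi(z)}=0$ and $\xi(z)(p_0(z))=0$ for all $z$. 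An element $\xi\in\mathfrak{se}(3)$ with zero rotational part is a pure translation $v\in\mathbb R^3$, and then $\xi(p_0(z))=v$; hence $v=0$ and $\xi(z)=0$ for all $z$. Therefore $\theta_L(\delta\mathbf g)=0$, and since $TL_{\mathbf g^{-1}}$ is a linear isomorphism $T_{\mathbf g}G\to\mathfrak G$, we get $\delta\mathbf g=0$. This is the step I expect to carry the real content: one must be careful that the two quadratic terms control the \emph{full} six-dimensional $\mathfrak{se}(3)$ at each $z$ — the inertia term pins down the angular part and, crucially, \emph{once} the angular part is known to vanish, the translational part is recovered from $\xi(z)(p_0(z))$ without any reference to $p_0$ being an embedding (a single point $p_0(z)$ suffices because a pure translation acts the same way everywhere). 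Smoothness of $\mathbf g\mapsto\ll\cdot,\cdot\gg_{\mathbf g}$ follows because $\theta_L$ is smooth and the integrand depends smoothly on its arguments with $\mathcal L$ compact.

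For left invariance, let $\mathbf h\in G$ and consider the left translation $L_{\mathbf h}:G\to G$. The defining property of the Maurer--Cartan form gives $\theta_L\circ TL_{\mathbf h}=\theta_L$, i.e.\ $\theta_L(TL_{\mathbf h}\delta\mathbf g)(z)=\theta_L(\delta\mathbf g)(z)$ for every $z$ (this is exactly the pointwise statement, since the group operations in $G$ are defined pointwise and the Maurer--Cartan form of $SE(3)$ has the analogous invariance). Substituting $TL_{\mathbf h}\delta\mathbf g_i$ for $\delta\mathbf g_i$ in \eqref{eq:riemetric} therefore leaves the integrand — hence the whole expression — unchanged, which is the asserted identity $\ll\delta\mathbf g_1,\delta\mathbf g_2\gg_{\mathbf h\mathbf g}=\ll TL_{\mathbf h}\delta\mathbf g_1,TL_{\mathbf h}\delta\mathbf g_2\gg_{\mathbf h\mathbf g}$; taking $\mathbf h=\mathbf g^{-1}$ and $\mathbf g$ arbitrary yields the displayed form $\ll\delta\mathbf g_1,\delta\mathbf g_2\gg_{\mathbf g}=\ll\theta_L(\delta\mathbf g_1),\theta_L(\delta\mathbf g_2)\gg_{\mathbf e}$, since $\theta_L$ restricted to $T_{\mathbf e}G$ is the identity. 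The only mild subtlety here is notational: one should record once and for all that all structures on $G$ ($\mathbf g\mapsto\mathbf g(z)$, the Maurer--Cartan form, the exponential) are obtained by evaluation at $z\in\mathcal L$ from the finite-dimensional $SE(3)$, so that every identity needed reduces to the corresponding classical fact on $SE(3)$ under the integral sign.
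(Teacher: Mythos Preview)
Your proof is correct and follows essentially the same approach as the paper: reduce to showing $\ll\cdot,\cdot\gg_{\mathbf e}$ is an inner product on $\mathfrak G$, then argue that vanishing of the nonnegative integrand forces $\omega_{\mathbf V(z)}=0$ and $\mathbf V(z)(p_0(z))=0$, hence $\mathbf V=0$. You are in fact more explicit than the paper on two points it leaves implicit---the step from $\omega_{\xi(z)}=0$ and $\xi(z)(p_0(z))=0$ to $\xi(z)=0$ (via the pure-translation observation), and the verification of left invariance via $\theta_L\circ TL_{\mathbf h}=\theta_L$---but the underlying argument is the same.
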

\begin{proof}
We show that $\ll,\gg_{e}$ is a scalar product on $\mathfrak{g}$.\\
The bi-linearity and positiveness are trivial, it remains to show the positive definiteness, let $\mathbf{V}\in\mathfrak{G}$ such that $$\ll V,V\gg_{e}=0,$$
this also reads $$\int_{\mathcal{L}}m_{z}|\mathbf{V}(z)(p_{0}(z))|^{2}+(\omega_{\mathbf{V}(z)},I_{z}(\omega_{\mathbf{V}(z)}))dz=0,$$
which implies by positiveness of $m$ and $I$ and continuity of the integrand that $\omega_{V(z)}=0$ and $\mathbf{V}(z)(p_{0}(z))=0$, which implies that $\mathbf{V}=0$. \end{proof}\\ \\
Notice that for each fixed $z\in\mathcal{L}$ we have a scalar product on $\mathfrak{se}(3)$ defined by $\forall{V,W\in\mathfrak{se}(3)}$ $$<V,W>_{z}=m_{z}(V(p_{0}(z)),W(p_{0}(z)))+(\omega_{V},I_{z}(\omega_{W})).$$
We consider a motion $P:\mathbb{R}^{+}\rightarrow{M}$, by the fact that the action \eqref{eq:grpact} is free and transitive, we get a curve $\mathbf{g}:\mathbb{R}^{+}\rightarrow{G}$ such that it satisfies \eqref{eq:reduction}, denoting $\mathbf{W}(t)=\theta_{L}(\mathbf{g'}(t))$, the formula of the kinetic energy is :  $$E_{k}(P(t),P'(t))=\frac{1}{2}\ll \mathbf{g}'(t),\mathbf{g}'(t)\gg_{g(t)},$$ for the motion $P(t)=P_{0}*\mathbf{g}(t)$.\\
An explicit expression of the kinetic energy is
\begin{equation}\label{eq:kinetic}
    E_{k}(P(t),P'(t))=\frac{1}{2}\int_{\mathcal{L}}m_{z}|\mathbf{W}(t)(z)(p_{0}(z))|^{2}+(\omega_{\mathbf{W}(t)(z)},I_{z}(\omega_{\mathbf{W}(t)(z)}))dz.
 \end{equation}
 The first term corresponds to the kinetic energy of the vertebrate, and the second one is the kinetic energy of the rigid sections. An intuitive presentation of this formula is given in the appendix.\\

\begin{definition}
We denote by $\mathbb{I}:\mathfrak{G}\rightarrow{\mathfrak{G}^{*}}$ the mapping $\mathbb{I}(\mathbf{V})(\mathbf{W})=\ll \mathbf{V},\mathbf{W}\gg_{\mathbf{e}}$.\\
We define the Klein form on $\mathfrak{G}$ by integrating the klein formula given in item (12) in definition 12 
$$\mathfrak{K}(\mathbf{V},\mathbf{W})=\int_{\mathcal{L}}\mathfrak{k}(\mathbf{V}(z),\mathbf{W}(z))dz.$$
We have then $\mathfrak{K}(Ad_{\mathbf{g}}(\mathbf{V}),Ad_{\mathbf{g}}(\mathbf{W}))=\mathfrak{K}(\mathbf{V},\mathbf{W})$ for all $\mathbf{g}\in G$ and $\mathbf{V},\mathbf{W}\in\mathfrak{G}$, so we obtain that $\forall{\mathbf{U,V,W}\in\mathfrak{G}}$
\begin{equation}\label{eq:skewsym}
\mathfrak{K}(ad_{\mathbf{U}}\mathbf{V},\mathbf{W})=-\mathfrak{K}(ad_{\mathbf{U}}\mathbf{W},\mathbf{V}). \end{equation}
We define the inertia operator $A:\mathfrak{G}\rightarrow{\mathfrak{G}}$ such that $$\mathfrak{K}(A(\mathbf{V}),\mathbf{W})=\ll \mathbf{V},\mathbf{W}\gg_{e},$$ explicitly we have the formula $$A(\mathbf{V})(z)(p_{0}(z))=I_{z}(\omega_{\mathbf{V}(z)}),$$ and $$\omega_{A(\mathbf{V})(z)}=m_{z}\mathbf{V}(z)(p_{0}(z)).$$
\end{definition}

\section{Euler Poincare equations}
We defined the configuration space and identified him with a Lie group depending on the choice of a reference configuration, now we introduce a potential energy that quantify the obstruction of the motion to be a rigid one. \\ 
There is a natural vector field $X$ on $G$ defined by $$X(\mathbf{g})(z)=\partial_{z}\mathbf{g}(z)\in T_{\mathbf{g}(z)}\mathrm{SE}(3),$$
the zeros of $X$ are exactly rigid body motions of the snake robot, we denote by $$\xi:G\rightarrow{\mathfrak{G}},$$
the map $$\xi(\mathbf{g})=\theta_{L}(X(\mathbf{g})).$$
The following definition is natural.
\begin{definition}
Let $\mathcal{H}:\mathfrak{G}\rightarrow{\mathfrak{G}}$ be a symmetric positive definite (with respect to $\ll,\gg_{\mathbf{e}}$) endomorphism, we call elastic energy the function \begin{equation}\label{eq:pot}
U(\mathbf{g})=\frac{1}{2}\ll\mathcal{H}(\xi(\mathbf{g})),\xi(\mathbf{g})\gg_{\mathbf{e}}.
\end{equation}
This function satisfies $U(\mathbf{g})=0$ if and only if $\mathbf{g}$ is a rigid motion.
\end{definition}

A central concept in mechanics is the Lagrangian, that is defined by the difference between kinetic and potential energy :
\begin{definition}
The Lagrangian of the snake robot is given by 
\begin{equation}\label{eq:lag}
L(\mathbf{g},\mathbf{v})=\frac{1}{2}\ll \mathbf{v},\mathbf{v}\gg_{\mathbf{g}}-U(\mathbf{g}),
\end{equation}
for all $(\mathbf{g},\mathbf{v})\in TG$.
\end{definition}
Let $\mathbf{g}(t)\in G$ be the motion of a Snake robot, the principle of least action of physics tell that it minimizes the Lagrangian $L$ along all curves that fixes $\mathbf{g_{0}}$ and $\mathbf{g_{1}}$ which corresponds to initial and final positions.\\
\begin{definition}
We call a perturbation of a motion $\mathbf{g}(t)\in G$ a map $\mathbf{h}(s,t)\in G$ such that : 
\begin{itemize}
\item $\mathbf{h}(0,t)=\mathbf{g}(t)$ for all $t\in [0,1]$.
\item $\mathbf{h}(s,0)=\mathbf{g_{0}}$, $\mathbf{h}(s,1)=\mathbf{g_{1}}$, that is $\mathbf{h}(s,t)$ fixes the initial and final positions. 
\item We call the variation vector field $\mathbf{V}(t)=\partial_{s}\mathbf{h}(0,t)\in T_{\mathbf{g}(t)}G$ along $\mathbf{g}(t)$ and it satisfies $\mathbf{V}(0)=\mathbf{V}(1)=0$.
\end{itemize}
\end{definition}
The principle of least action tells us that 
\begin{equation}
    \int_{0}^{1}L(\mathbf{g}(t),\mathbf{g}'(t))dt=\min_{\gamma\in C^{\infty}([0,1],G)|\mathbf{\gamma}(0)=\mathbf{g}_{0}|\mathbf{\gamma}(1)=\mathbf{g}_{1}}\int_{0}^{1}L(\mathbf{\gamma}(t),\mathbf{\gamma}'(t))dt
\end{equation}
We compute the Lagrangian on these perturbations 
$$f(s):=\int_{0}^{1}L(\mathbf{h}(s,t),\partial_{t}\mathbf{h}(s,t))dt=\int_{0}^{1}\frac{1}{2}\ll\partial_{t}\mathbf{h}(s,t),\partial_{t}\mathbf{h}(s,t)\gg_{\mathbf{h}(s,t)}-U(\mathbf{h}(s,t))dt.$$
The principle of least action implies that $$f'(0)=0,$$ for all variations $\mathbf{V}\in\Gamma(\mathbf{g})$ such that $\mathbf{V}(0)=\mathbf{V}(1)=0$. \\
Using proprieties of covariant derivative formula (16) in proposition 4, we obtain  
$$\int_{0}^{1}\ll D_{s}\partial_{t}\mathbf{h}(0,t),\partial_{t}\mathbf{h}(0,t)\gg_{\mathbf{h}(0,t)}-dU_{\mathbf{h}(0,t)}(\partial_{s}\mathbf{h}(0,t))dt=0,$$
using (1) (3) in definition 11, we have
$$\int_{0}^{1}\ll D_{t}\mathbf{V}(t),\mathbf{g}'(t)\gg_{\mathbf{g}(t)}-dU_{\mathbf{g}(t)}(\mathbf{V}(t))dt=0,$$
by formula (16), we get $$\frac{d}{dt}\ll \mathbf{V}(t),\mathbf{g}'(t)\gg_{\mathbf{g}(t)}=\ll D_{t}\mathbf{V}(t),\mathbf{g}'(t)\gg_{\mathbf{g}(t)}+\ll \mathbf{V}(t),D_{t}\partial_{t}\mathbf{g}(t)\gg_{\mathbf{g}(t)}.$$
Using the fact that $\mathbf{V}(0)=\mathbf{V}(1)=0$ we get
\begin{equation}\label{eq:varform}
\int_{0}^{1}\ll D_{t}\partial_{t}\mathbf{g}(t),\mathbf{V}(t)\gg_{\mathbf{g}(t)}+dU_{\mathbf{g}(t)}(\mathbf{V}(t))dt=0,\end{equation}
for all $\mathbf{V}\in\Gamma(\mathbf{g})$ such that $\mathbf{V}(0)=\mathbf{V}(1)=0$.\\ \\
We now have to compute the differential of $U$.
\begin{lemma}
The differential of $U$ is given by 
\begin{equation}\label{eq:diffU}
dU_{\mathbf{g}}(\mathbf{V})=\ll\partial_{z}\mathbf{Z},\mathcal{H}(\xi(\mathbf{g}))\gg_{\mathbf{e}}+\ll[\xi(\mathbf{g}),\mathbf{Z}],\mathcal{H}(\xi(\mathbf{g})))\gg_{\mathbf{e}} ,\end{equation} where $\mathbf{Z}=\theta_{L}(\mathbf{V})$. 
\end{lemma}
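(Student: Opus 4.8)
The plan is to reduce the computation of $dU_{\mathbf g}$ to that of the differential $d\xi_{\mathbf g}$ of the map $\xi\colon G\to\mathfrak G$, and to compute the latter via the Maurer--Cartan structure equation. Fix $\mathbf g\in G$, $\mathbf V\in T_{\mathbf g}G$, and a smooth curve $s\mapsto\mathbf g(s)\in G$ with $\mathbf g(0)=\mathbf g$ and $\partial_s\mathbf g(0)=\mathbf V$. Since $\ll\cdot,\cdot\gg_{\mathbf e}$ is a symmetric bilinear form on $\mathfrak G$ and $\mathcal H$ is $\ll\cdot,\cdot\gg_{\mathbf e}$-symmetric, differentiating $U(\mathbf g(s))=\tfrac12\ll\mathcal H(\xi(\mathbf g(s))),\xi(\mathbf g(s))\gg_{\mathbf e}$ at $s=0$ gives, by the product rule and these two symmetries,
\[
dU_{\mathbf g}(\mathbf V)=\ll\mathcal H(\xi(\mathbf g)),\,d\xi_{\mathbf g}(\mathbf V)\gg_{\mathbf e}.
\]
So it suffices to establish the identity $d\xi_{\mathbf g}(\mathbf V)=\partial_z\mathbf Z+[\xi(\mathbf g),\mathbf Z]$, with $\mathbf Z=\theta_L(\mathbf V)$, and then rewrite everything using the symmetry of the metric.

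The core step is the computation of $d\xi_{\mathbf g}(\mathbf V)$, which I would carry out pointwise in $z\in\mathcal L$, where the infinite-dimensional group $G$ collapses to the matrix group $\mathrm{SE}(3)$. Recall $\xi(\mathbf g(s))(z)=\theta_L(\partial_z\mathbf g(s))(z)=\mathbf g(s)(z)^{-1}\partial_z\mathbf g(s)(z)$, and set $\mathbf Z(s)(z)=\theta_L(\partial_s\mathbf g(s))(z)=\mathbf g(s)(z)^{-1}\partial_s\mathbf g(s)(z)$, so that $\mathbf Z(0)=\theta_L(\mathbf V)=\mathbf Z$ and $\xi(\mathbf g(0))=\xi(\mathbf g)$. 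Differentiating in $s$, using $\partial_s(\mathbf g^{-1})=-\mathbf g^{-1}(\partial_s\mathbf g)\mathbf g^{-1}$ and Schwarz' theorem $\partial_s\partial_z\mathbf g=\partial_z\partial_s\mathbf g$, the mixed terms $\mathbf g^{-1}\partial_s\partial_z\mathbf g$ cancel and one is left with the zero-curvature (Maurer--Cartan) identity $\partial_s\big(\xi(\mathbf g(s))\big)-\partial_z\big(\mathbf Z(s)\big)=[\xi(\mathbf g(s)),\mathbf Z(s)]$; evaluating at $s=0$ yields $d\xi_{\mathbf g}(\mathbf V)=\partial_z\mathbf Z+[\xi(\mathbf g),\mathbf Z]$. (Here I would double-check the sign and ordering of the bracket against the conventions fixed in the Appendix for $\theta_L$ and for the bracket on $\mathfrak{se}(3)$; with the left Maurer--Cartan trivialization and the matrix commutator this is exactly the structure equation $d\theta_L+\tfrac12[\theta_L,\theta_L]=0$ restricted to the $(s,z)$-surface, and it produces the sign displayed in the statement.)

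Finally I would substitute into the expression from the first paragraph and use only the symmetry of $\ll\cdot,\cdot\gg_{\mathbf e}$:
\[
dU_{\mathbf g}(\mathbf V)=\ll\mathcal H(\xi(\mathbf g)),\,\partial_z\mathbf Z+[\xi(\mathbf g),\mathbf Z]\gg_{\mathbf e}=\ll\partial_z\mathbf Z,\mathcal H(\xi(\mathbf g))\gg_{\mathbf e}+\ll[\xi(\mathbf g),\mathbf Z],\mathcal H(\xi(\mathbf g))\gg_{\mathbf e},
\]
which is the claimed formula \eqref{eq:diffU}. I expect the main obstacle to be precisely the bookkeeping in the second paragraph: verifying rigorously that $d\xi_{\mathbf g}(\mathbf V)$ is $\partial_z\mathbf Z+[\xi(\mathbf g),\mathbf Z]$ rather than a sign or ordering variant, since $X$ (and hence $\xi$) is built from the non-group operation $\partial_z$, so one must make sure the variation passes through correctly and produces exactly the adjoint correction term; the interchange of the $s$- and $z$-derivatives itself is harmless because the whole identity is pointwise in $z$ and $\mathbf g$ is jointly smooth in $(s,z)$.
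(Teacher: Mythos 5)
Your proposal is correct and follows essentially the same route as the paper: both reduce the claim, via the symmetry of $\mathcal{H}$ and of $\ll\cdot,\cdot\gg_{\mathbf e}$, to the key identity $d\xi_{\mathbf g}(\mathbf V)=\partial_z\mathbf Z+[\xi(\mathbf g),\mathbf Z]$, which is then obtained by differentiating $\mathbf g^{-1}\partial_z\mathbf g$ along a variation and invoking Schwarz' theorem. The only difference is cosmetic: you carry out the zero-curvature computation on the full $4\times4$ matrices, whereas the paper expands it block by block in $R_{\mathbf g}$ and $u_{\mathbf g}$; your version is slightly cleaner and the sign/ordering of the bracket you obtain matches the paper's commutator convention on $\mathfrak{se}(3)$.
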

\begin{proof}
Let $\mathbf{g}(t)\in G$ a curve such that $\mathbf{g}(0)=g=\begin{pmatrix}R_{\mathbf{g}}&u_{\mathbf{g}}\\ 0&1\end{pmatrix}$, $\mathbf{g}'(0)=\mathbf{V}=\begin{pmatrix}R_{\mathbf{g}}j(\omega_{\mathbf{Z}})& \mathbf{v}\\ 0 &0\end{pmatrix}$, $$\frac{d}{dt}U(\mathbf{g}(t))=<<\partial_{t}\xi(\mathbf{g}(t)),\mathcal{H}(\xi(\mathbf{g}(t)))>>.$$ 
We have $$\xi(\mathbf{g})=\begin{pmatrix}R_{\mathbf{g}}^{-1}\partial_{z}R_{\mathbf{g}}&R_{\mathbf{g}}^{-1}\partial_{z}u_{\mathbf{g}}\\ 0& 0\end{pmatrix}=\begin{pmatrix}j(\omega_{\xi(\mathbf{g})})&v_{\xi(\mathbf{g})}\\ 0&0\end{pmatrix}.$$
We compute $$\partial_{t}|_{t=0}\xi(\mathbf{g}(t))=\begin{pmatrix}-R_{\mathbf{g}}^{-1}\partial_{t}R_{\mathbf{g}}R_{\mathbf{g}}^{-1}\partial_{z}R_{\mathbf{g}}&-R_{\mathbf{g}}^{-1}\partial_{t}R_{\mathbf{g}}R_{\mathbf{g}}^{-1}\partial_{z}u_{\mathbf{g}}\\ 0&0\end{pmatrix}+\begin{pmatrix}R_{\mathbf{g}}^{-1}\partial_{z}\partial_{t}R_{\mathbf{g}}&R_{\mathbf{g}}^{-1}\partial_{z}\partial_{t}u_{\mathbf{g}}\\ 0&0\end{pmatrix}$$
$$=\begin{pmatrix}-j(\omega_{\mathbf{Z}})j(\omega_{\xi(\mathbf{g})})+R_{\mathbf{g}}^{-1}\partial_{z}(R_{\mathbf{g}}j(\omega_{\mathbf{Z}}))&-j(\omega_{\mathbf{Z}})v_{\xi(\mathbf{g})}+R_{\mathbf{g}}^{-1}\partial_{z}\mathbf{v} \\ 0 & 0\end{pmatrix}$$
$$=\begin{pmatrix}-j(\omega_{\mathbf{Z}})j(\omega_{\xi(\mathbf{g})})+j(\omega_{\xi(\mathbf{g})})j(\omega_{\mathbf{Z}})+\partial_{z}j(\omega_{\mathbf{Z}})& -j(\omega_{\mathbf{Z}})v_{\xi(\mathbf{g})}+R_{\mathbf{g}}^{-1}\partial_{z} \mathbf{v}\\ 0&0\end{pmatrix}.$$

Let $$\mathbf{Z}=\theta_{L}(\mathbf{V})=\begin{pmatrix}j(\omega_{\mathbf{Z}})&R_{\mathbf{g}}^{-1}\mathbf{v}\\ 0&0\end{pmatrix},$$ we obtain
$$\partial_{z}\mathbf{Z}=\begin{pmatrix}\partial_{z}j(\omega_{\mathbf{Z}})&-R_{\mathbf{g}}^{-1}\partial_{z}R_{\mathbf{g}}R_{\mathbf{g}}^{-1}\mathbf{v}+R_{\mathbf{g}}^{-1}\partial_{z}\mathbf{v}\\ 0& 0\end{pmatrix}.$$
Which is equal to $$\partial_{z}\mathbf{Z}=\begin{pmatrix}\partial_{z}j(\omega_{\mathbf{Z}})&-j(\omega_{\xi(\mathbf{g})})R_{\mathbf{g}}^{-1}\mathbf{v}+R_{\mathbf{g}}^{-1}\partial_{z}\mathbf{v}\\ 0 &0\end{pmatrix}.$$
This gives $$\partial_{t=0}\xi(\mathbf{g}(t))=\partial_{z}\mathbf{Z}+[\xi(\mathbf{g}),\mathbf{Z}],$$
and finishes the proof. 
\end{proof}
\begin{corollary}
The differential of $U$ is also given by $$dU_{\mathbf{g}}(\mathbf{V})=\ll\mathbb{I}^{-1}ad_{\xi(\mathbf{g})}^{*}\mathbb{I}\mathcal{H}(\xi(\mathbf{g})),\mathbf{Z}\gg_{\mathbf{e}}+<\mathcal{H}(\xi(\mathbf{g})),\mathbf{Z}>_{z}|^{L}_{0}-\ll\partial_{z}\mathcal{H}(\xi(\mathbf{g})),\mathbf{Z}\gg_{\mathbf{e}},$$
where $f|_{0}^{L}=f(L)-f(0)$. 
\end{corollary}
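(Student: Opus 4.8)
The plan is to start from the expression for $dU_{\mathbf{g}}(\mathbf{V})$ furnished by the previous Lemma, namely the sum of the two terms $\ll\partial_{z}\mathbf{Z},\mathcal{H}(\xi(\mathbf{g}))\gg_{\mathbf{e}}$ and $\ll[\xi(\mathbf{g}),\mathbf{Z}],\mathcal{H}(\xi(\mathbf{g}))\gg_{\mathbf{e}}$ with $\mathbf{Z}=\theta_{L}(\mathbf{V})$, and to rewrite each term so that the test object $\mathbf{Z}$ appears alone, neither differentiated nor bracketed. The second term will be handled by a purely algebraic move that transfers the bracket onto $\mathbf{Z}$ through the coadjoint operator, which is exactly what produces $\ll\mathbb{I}^{-1}ad^{*}_{\xi(\mathbf{g})}\mathbb{I}\mathcal{H}(\xi(\mathbf{g})),\mathbf{Z}\gg_{\mathbf{e}}$; the first term will be handled by an integration by parts in the material variable $z\in\mathcal{L}$, which produces the boundary contribution $<\mathcal{H}(\xi(\mathbf{g})),\mathbf{Z}>_{z}|^{L}_{0}$ together with $-\ll\partial_{z}\mathcal{H}(\xi(\mathbf{g})),\mathbf{Z}\gg_{\mathbf{e}}$. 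Summing the two rewritings gives the claimed identity.

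For the second term I would unwind the definitions. By definition of $\mathbb{I}$ one has $\ll\mathbf{A},\mathbf{B}\gg_{\mathbf{e}}=\langle\mathbb{I}\mathbf{A},\mathbf{B}\rangle$ for the canonical pairing $\mathfrak{G}^{*}\times\mathfrak{G}\rightarrow\mathbb{R}$, and the coadjoint operator satisfies $\langle ad^{*}_{\mathbf{U}}\mu,\mathbf{W}\rangle=\langle\mu,[\mathbf{U},\mathbf{W}]\rangle$ (with that sign convention for $ad^{*}$). Taking $\mu=\mathbb{I}\mathcal{H}(\xi(\mathbf{g}))$, $\mathbf{U}=\xi(\mathbf{g})$ and $\mathbf{W}=\mathbf{Z}$ gives $\ll\mathbb{I}^{-1}ad^{*}_{\xi(\mathbf{g})}\mathbb{I}\mathcal{H}(\xi(\mathbf{g})),\mathbf{Z}\gg_{\mathbf{e}}=\langle ad^{*}_{\xi(\mathbf{g})}\mathbb{I}\mathcal{H}(\xi(\mathbf{g})),\mathbf{Z}\rangle=\langle\mathbb{I}\mathcal{H}(\xi(\mathbf{g})),[\xi(\mathbf{g}),\mathbf{Z}]\rangle=\ll\mathcal{H}(\xi(\mathbf{g})),[\xi(\mathbf{g}),\mathbf{Z}]\gg_{\mathbf{e}}$, and by symmetry of $\ll\cdot,\cdot\gg_{\mathbf{e}}$ this is exactly the second term of the Lemma. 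Equivalently, one may route this through the Klein form: writing $\ll\cdot,\cdot\gg_{\mathbf{e}}=\mathfrak{K}(A\cdot,\cdot)$ and using the invariance relation \eqref{eq:skewsym} to move $ad_{\xi(\mathbf{g})}$ across $\mathfrak{K}$ reproduces the same operator $\mathbb{I}^{-1}ad^{*}_{\xi(\mathbf{g})}\mathbb{I}$; this variant makes the role of the $Ad$-invariance of $\mathfrak{K}$ explicit.

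For the first term I would use that $\ll\cdot,\cdot\gg_{\mathbf{e}}$ is the integral over $\mathcal{L}$ of the pointwise scalar products $<\cdot,\cdot>_{z}$, so that $\ll\partial_{z}\mathbf{Z},\mathcal{H}(\xi(\mathbf{g}))\gg_{\mathbf{e}}=\int_{\mathcal{L}}<\partial_{z}\mathbf{Z}(z),\mathcal{H}(\xi(\mathbf{g}))(z)>_{z}\,dz$, and integrate by parts in $z$: the boundary term is $<\mathbf{Z},\mathcal{H}(\xi(\mathbf{g}))>_{z}|^{L}_{0}$, which by symmetry is the announced $<\mathcal{H}(\xi(\mathbf{g})),\mathbf{Z}>_{z}|^{L}_{0}$, and the remaining integral is $-\int_{\mathcal{L}}<\mathbf{Z}(z),\partial_{z}\mathcal{H}(\xi(\mathbf{g}))(z)>_{z}\,dz=-\ll\partial_{z}\mathcal{H}(\xi(\mathbf{g})),\mathbf{Z}\gg_{\mathbf{e}}$. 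I expect this last step to be the only real obstacle, for a subtle reason: the pointwise form $<\cdot,\cdot>_{z}$ genuinely depends on $z$ — through $m_{z}$, $I_{z}$, and above all through the map $V\mapsto V(p_{0}(z))$, which varies with $z$ since $p_{0}$ is a non-constant embedding — so the naive Leibniz rule $\partial_{z}<\mathbf{Z},\mathcal{H}(\xi(\mathbf{g}))>_{z}=<\partial_{z}\mathbf{Z},\mathcal{H}(\xi(\mathbf{g}))>_{z}+<\mathbf{Z},\partial_{z}\mathcal{H}(\xi(\mathbf{g}))>_{z}$ carries, a priori, an extra term coming from differentiating the bilinear form itself. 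To close the argument one must either restrict to a setting where that extra term vanishes (a homogeneous reference medium, or more generally the coincidence of $<\cdot,\cdot>_{z}$ with a fixed $z$-independent pairing on the relevant subspace), or carry out the integration by parts in the matrix picture used in the proof of the Lemma — where $\partial_{z}\mathbf{Z}$ is a literal entrywise derivative — and only afterwards re-express the bulk and boundary pieces through $\ll\cdot,\cdot\gg_{\mathbf{e}}$ and $<\cdot,\cdot>_{z}$. Once this bookkeeping of the $z$-dependence of the metric is settled, adding the two transformed terms yields the stated formula for $dU_{\mathbf{g}}(\mathbf{V})$.
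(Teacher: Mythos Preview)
Your approach is exactly the paper's: the bracket term is rewritten via $\mathbb{I}$ and $ad^{*}$, and the $\partial_{z}\mathbf{Z}$ term is handled by integration by parts in $z$. You are in fact more careful than the paper in flagging the $z$-dependence of $<\cdot,\cdot>_{z}$ as a possible obstruction to the naive Leibniz rule; the paper simply asserts ``an integration by parts allow us to get'' the identity without addressing this point, so the subtlety you raise is real but is glossed over in the original proof as well.
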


\begin{proof}
We observe that $$\ll[\xi(\mathbf{g}),\mathbf{Z}],\mathcal{H}(\xi(\mathbf{g}))\gg_{\mathbf{e}}=\mathbb{I}\mathcal{H}(\xi(\mathbf{g})))(ad_{\xi(\mathbf{g})}\mathbf{Z})$$ $$=ad_{\xi(\mathbf{g})}^{*}(\mathbb{I}\mathcal{H}(\xi(\mathbf{g})))(\mathbf{Z})=\ll\mathbb{I}^{-1}ad_{\xi(\mathbf{g})}^{*}\mathbb{I}\mathcal{H}(\xi(\mathbf{g})),\mathbf{Z}\gg_{\mathbf{e}}.$$
On the other hand, an integration by parts allow us to get 
$$\ll\partial_{z}\mathbf{Z},\mathcal{H}(\xi(\mathbf{g}))\gg_{\mathbf{e}} =<\mathcal{H}(\xi(\mathbf{g})),\mathbf{Z}>_{z}|^{L}_{0}-\ll\partial_{z}\mathcal{H}(\xi(\mathbf{g})),\mathbf{Z}\gg_{\mathbf{e}},$$
which finishes the proof.
\end{proof}\\\\
Return now to the expression \eqref{eq:varform}
$$\int_{0}^{1}\ll D_{t}\partial_{t}\mathbf{g}(t),\mathbf{V}(t)\gg_{\mathbf{g}(t)}+dU_{\mathbf{g}(t)}(\mathbf{V}(t))dt=0,$$ for all $\mathbf{V}\in\Gamma(\mathbf{g})$ such that $\mathbf{V}(0)=\mathbf{V}(1)=0$. \\ \\
If we denote by $\mathbf{W}(t)=\theta_{L}(\mathbf{g}'(t))$ and $\mathbf{Z}(t)=\theta_{L}(\mathbf{V}(t))$. \\\\
We obtain by using the left invariance of $\ll,\gg$ and formula (10)
$$\int_{0}^{1}\ll\partial_{t}\mathbf{W}-\mathbb{I}^{-1}ad_{\mathbf{W}(t)}^{*}\mathbb{I}\mathbf{W}(t),\mathbf{Z}(t)\gg_{\mathbf{e}}$$ $$+\ll\mathbb{I}^{-1}ad_{\xi(\mathbf{g}(t))}^{*}\mathbb{I}\mathcal{H}(\xi(\mathbf{g}(t))),\mathbf{Z}(t)\gg_{\mathbf{e}}-\ll\partial_{z}\mathcal{H}(\xi(\mathbf{g}(t))),\mathbf{Z}(t)\gg_{\mathbf{e}}$$ $$+<\mathcal{H}(\xi(\mathbf{g}(t))),\mathbf{Z}(t)>_{z}|^{L}_{0}dt=0,$$
for all $\mathbf{Z}(t)\in\mathfrak{G}$ such that $\mathbf{Z}(0)=\mathbf{Z}(1)=0$.\\ \\
We choose variations such that $\mathbf{Z}(t)(0)=\mathbf{Z}(t)(L)=0$ and apply Dubois-Raymond Lemma to obtain
\begin{equation}\label{eq:motion}
\partial_{t}\mathbf{W}-\mathbb{I}^{-1}ad_{\mathbf{W}}^{*}\mathbb{I}\mathbf{W}+\mathbb{I}^{-1}ad_{\xi(\mathbf{g})}^{*}\mathbb{I}\mathcal{H}(\xi(\mathbf{g}))-\partial_{z}\mathcal{H}(\xi(\mathbf{g}))=0,
\end{equation} and the boundary conditions 
\begin{equation}\label{eq:bd}
\mathcal{H}(\xi(\mathbf{g}))(0)=\mathcal{H}(\xi(\mathbf{g}))(L)=0.\end{equation}
Using the fact that $ad\in L(\mathfrak{G},L_{skew}(\mathfrak{G},\mathfrak{K}))$ (equation \eqref{eq:skewsym}) we obtain $\forall{\mathbf{u,v}\in\mathfrak{G}}$
$$\mathbb{I}^{-1}ad_{\mathbf{u}}^{*}\mathbb{I}\mathbf{v}=A^{-1}[A(\mathbf{v}),\mathbf{u}],$$ 
this reduction allow us to have the more simpler equations
\begin{equation}
\partial_{t}A(\mathbf{W})+[\mathbf{W},A(\mathbf{W})]=A\partial_{z}\mathcal{H}(\xi(\mathbf{g}))+[\xi(\mathbf{g}),A\mathcal{H}(\xi(\mathbf{g}))],\end{equation}
with boundary conditions $$\mathcal{H}(\xi(\mathbf{g}))(0)=\mathcal{H}(\xi(\mathbf{g}))(L)=0,$$
We have proved the following theorem.
\begin{theorem}
The dynamic of a snake robot is described by the following quasi-linear hyperbolic PDEs.
\begin{equation}\label{eq:dyn}
\partial_{t}\theta_{L}(\partial_{t}\mathbf{g})+A^{-1}[(\theta_{L}(\partial_{t}\mathbf{g}),A((\theta_{L}(\partial_{t}\mathbf{g}))]=\partial_{z}\mathcal{H}(\theta_{L}(\partial_{z}\mathbf{g}))+A^{-1}[\theta_{L}(\partial_{z}\mathbf{g}),A\mathcal{H}(\theta_{L}(\partial_{z}\mathbf{g}))],
\end{equation}
with initial conditions $$\mathbf{g}(0)=\mathbf{g_{0}}\in C^{\infty}(\mathcal{L},\mathrm{SE}(3)),$$ $$\theta_{L}(\partial_{t}\mathbf{g})(0)=\mathbf{W_{0}}\in C^{\infty}(\mathcal{L},\mathfrak{se}(3)),$$ and the boundary value conditions $$\partial_{z}\mathbf{g}(t,L)=\partial_{z}\mathbf{g}(t,0)=0.$$
\end{theorem}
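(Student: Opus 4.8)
The plan is to show that the Euler--Poincar\'e equation already derived, namely \eqref{eq:motion}, together with the algebraic identity $\mathbb{I}^{-1}ad_{\mathbf u}^{*}\mathbb{I}\mathbf v=A^{-1}[A(\mathbf v),\mathbf u]$ and the kinematic definitions $\mathbf W=\theta_{L}(\partial_{t}\mathbf g)$, $\xi(\mathbf g)=\theta_{L}(\partial_{z}\mathbf g)$, can be rewritten in the stated form \eqref{eq:dyn}. So the backbone of the argument is: (i) recall that the variational principle $f'(0)=0$, expanded via the covariant-derivative identities of Proposition~4 and the formula for $dU$ from the Lemma and its Corollary, produces the weak form displayed just before \eqref{eq:motion}; (ii) localize using the Dubois--Raymond (fundamental) lemma on variations $\mathbf Z$ vanishing at both $t$-endpoints and at both $z$-endpoints, which yields simultaneously the bulk equation \eqref{eq:motion} and the natural boundary condition \eqref{eq:bd}; (iii) substitute the skew-symmetry reduction to pass from $\mathbb{I},ad^{*}$ to $A,[\cdot,\cdot]$, obtaining the compact system, and finally translate $\mathbf W\mapsto\theta_{L}(\partial_{t}\mathbf g)$, $\xi(\mathbf g)\mapsto\theta_{L}(\partial_{z}\mathbf g)$ and rearrange to isolate $\partial_{t}\theta_{L}(\partial_{t}\mathbf g)$, which is exactly \eqref{eq:dyn}. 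The initial conditions are simply the Cauchy data one is free to prescribe on $\mathbf g(0)$ and $\theta_{L}(\partial_t\mathbf g)(0)$, and the boundary conditions $\partial_{z}\mathbf g(t,0)=\partial_{z}\mathbf g(t,L)=0$ are equivalent to \eqref{eq:bd} because $\mathcal H$ is a positive-definite (hence invertible) endomorphism and $\xi(\mathbf g)=0$ at an endpoint forces $\partial_z\mathbf g=0$ there.

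\textbf{Order of steps.}
First I would restate the weak identity \eqref{eq:varform} and carry out the left-translation to the identity, using the left-invariance of $\ll\cdot,\cdot\gg$ together with the standard relation $\ll D_{t}\partial_{t}\mathbf g,\mathbf V\gg_{\mathbf g}=\ll\partial_{t}\mathbf W-\mathbb{I}^{-1}ad^{*}_{\mathbf W}\mathbb{I}\mathbf W,\mathbf Z\gg_{\mathbf e}$ (formula~(10) / Proposition~4), which converts the geodesic term into an expression purely on the Lie algebra. Next I would insert the Corollary's expression for $dU_{\mathbf g}(\mathbf V)$, collecting the three Lie-algebra terms and the boundary term $<\mathcal H(\xi(\mathbf g)),\mathbf Z>_{z}\big|^{L}_{0}$. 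Then I would run Dubois--Raymond twice: once with $\mathbf Z$ compactly supported in the interior of $[0,L]$ for each $t$, forcing the integrand coefficient of $\mathbf Z$ to vanish pointwise, i.e.\ \eqref{eq:motion}; and once with $\mathbf Z$ free at the spatial endpoints, forcing $\mathcal H(\xi(\mathbf g))=0$ at $z=0,L$, i.e.\ \eqref{eq:bd}. Finally I would apply the operator identity to rewrite $\mathbb{I}^{-1}ad^{*}_{\mathbf W}\mathbb{I}\mathbf W=A^{-1}[A\mathbf W,\mathbf W]$ and $\mathbb{I}^{-1}ad^{*}_{\xi(\mathbf g)}\mathbb{I}\mathcal H(\xi(\mathbf g))=A^{-1}[A\mathcal H(\xi(\mathbf g)),\xi(\mathbf g)]$, move $A^{-1}$ through, and reorganize the signs to land on \eqref{eq:dyn}, closing with the remark that the boundary data in terms of $\partial_z\mathbf g$ follows from invertibility of $\mathcal H$.

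\textbf{Main obstacle.}
The delicate point is not the algebra but justifying the covariant-derivative manipulations in step~(i)--(ii) on the infinite-dimensional Lie group $G=C^{\infty}(\mathcal L,\mathrm{SE}(3))$: one must know that the Levi--Civita connection of the (weak, $L^2$-type) left-invariant metric \eqref{eq:riemetric} exists and that the identities $D_s\partial_t\mathbf h=D_t\partial_s\mathbf h$, metric-compatibility, and the Euler--Poincar\'e formula (10) genuinely hold here; these are the content of the cited Propositions~4 and Definition~11, so I would invoke them directly rather than reprove them, but I would flag that the weakness of the metric means the resulting system is interpreted as the equation satisfied by smooth solutions (a variational characterization), not as the abstract geodesic spray. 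A secondary, minor subtlety is the boundary term bookkeeping: the integration by parts in the Corollary produces $<\mathcal H(\xi(\mathbf g)),\mathbf Z>_{z}|_0^L$ with the $z$-pointwise inner product $<\cdot,\cdot>_z$ rather than the integrated $\ll\cdot,\cdot\gg_{\mathbf e}$, and one has to be careful that the Dubois--Raymond argument in the interior and the boundary argument use disjoint classes of test fields so that the two conclusions \eqref{eq:motion} and \eqref{eq:bd} are obtained independently and consistently.
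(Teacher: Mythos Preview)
Your proposal is correct and follows essentially the same approach as the paper: the paper derives \eqref{eq:motion} and \eqref{eq:bd} from the weak identity \eqref{eq:varform} via left-invariance, the Corollary's expression for $dU$, and the Dubois--Raymond lemma, then invokes the skew-symmetry identity $\mathbb{I}^{-1}ad_{\mathbf u}^{*}\mathbb{I}\mathbf v=A^{-1}[A(\mathbf v),\mathbf u]$ to rewrite everything in terms of $A$ and the bracket, arriving at \eqref{eq:dyn}. Your additional remarks on the weak-metric caveat in infinite dimensions and on separating interior and boundary test fields are more careful than the paper, which treats these points implicitly.
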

\begin{remark} The above theorem describes the uncontrolled motion of a snake robot using a Cosserat beam model. It has been stated in coordinates in \cite{BLCR-2022}. Note that the Lie Brackets there should be replaced by $ad$ and $ad^T$ symbols to be completely intrinsic. 
\end{remark}
The following result is fundamental in order to later prove existence (and possibly uniqueness) of solutions in appropriate functional spaces. 
\begin{theorem}{Conservation of energy}\\
The dynamics of theorem 4.2 conserves the total energy 
\begin{equation}
    E_{T}(t)=\frac{1}{2}\ll\mathbf{g}'(t),\mathbf{g}'(t)\gg_{\mathbf{g}(t)}+U(\mathbf{g}(t)).
\end{equation}

\end{theorem}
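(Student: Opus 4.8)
The plan is to differentiate $E_T(t)$ in time and show the derivative vanishes, using the equation of motion \eqref{eq:dyn} together with the boundary conditions $\partial_z\mathbf{g}(t,0)=\partial_z\mathbf{g}(t,L)=0$. First I would split $E_T$ into the kinetic part $E_k(t)=\frac{1}{2}\ll\mathbf{g}'(t),\mathbf{g}'(t)\gg_{\mathbf{g}(t)}$ and the potential part $U(\mathbf{g}(t))$. Using left invariance, $E_k(t)=\frac{1}{2}\ll\mathbf{W}(t),\mathbf{W}(t)\gg_{\mathbf{e}}=\frac12\mathfrak{K}(A(\mathbf{W}),\mathbf{W})$ where $\mathbf{W}=\theta_L(\partial_t\mathbf{g})$, so $\frac{d}{dt}E_k=\mathfrak{K}(\partial_t A(\mathbf{W}),\mathbf{W})$ (using symmetry of $A$ with respect to $\mathfrak{K}$). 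For the potential term, the computation of $dU$ already carried out in Lemma~4.1 gives $\frac{d}{dt}U(\mathbf{g}(t))=\ll\partial_t\xi(\mathbf{g}(t)),\mathcal{H}(\xi(\mathbf{g}(t)))\gg_{\mathbf{e}}$, and since $\xi(\mathbf{g})=\theta_L(X(\mathbf{g}))$ with $X$ built from $\partial_z\mathbf{g}$, one has the key compatibility relation $\partial_t\xi(\mathbf{g})=\partial_z\mathbf{W}+[\xi(\mathbf{g}),\mathbf{W}]$ (the same identity as in Lemma~4.1 with the roles of the $z$- and $t$-variations; this follows from equality of mixed partials of $\mathbf{g}$ and the cocycle identity for $\theta_L$).

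Next I would substitute these into $\frac{d}{dt}E_T$. From the equation of motion, $\partial_t A(\mathbf{W})=-[\mathbf{W},A(\mathbf{W})]+A\partial_z\mathcal{H}(\xi(\mathbf{g}))+[\xi(\mathbf{g}),A\mathcal{H}(\xi(\mathbf{g}))]$, so
\[
\frac{d}{dt}E_k=\mathfrak{K}\big(-[\mathbf{W},A(\mathbf{W})]+A\partial_z\mathcal{H}(\xi(\mathbf{g}))+[\xi(\mathbf{g}),A\mathcal{H}(\xi(\mathbf{g}))],\,\mathbf{W}\big).
\]
The term $\mathfrak{K}([\mathbf{W},A(\mathbf{W})],\mathbf{W})$ vanishes by the skew-symmetry \eqref{eq:skewsym}: $\mathfrak{K}(ad_{\mathbf{W}}A(\mathbf{W}),\mathbf{W})=-\mathfrak{K}(ad_{\mathbf{W}}\mathbf{W},A(\mathbf{W}))=0$. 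For the potential side, $\frac{d}{dt}U=\ll\partial_z\mathbf{W}+[\xi(\mathbf{g}),\mathbf{W}],\mathcal{H}(\xi(\mathbf{g}))\gg_{\mathbf{e}}=\mathfrak{K}(A\mathcal{H}(\xi(\mathbf{g})),\partial_z\mathbf{W})+\mathfrak{K}(A\mathcal{H}(\xi(\mathbf{g})),ad_{\xi(\mathbf{g})}\mathbf{W})$. The last term equals $-\mathfrak{K}(ad_{\xi(\mathbf{g})}A\mathcal{H}(\xi(\mathbf{g})),\mathbf{W})=-\mathfrak{K}([\xi(\mathbf{g}),A\mathcal{H}(\xi(\mathbf{g}))],\mathbf{W})$, which exactly cancels the matching term in $\frac{d}{dt}E_k$. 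What remains is $\mathfrak{K}(A\partial_z\mathcal{H}(\xi(\mathbf{g})),\mathbf{W})+\mathfrak{K}(A\mathcal{H}(\xi(\mathbf{g})),\partial_z\mathbf{W})$, which I would recognize as $\int_{\mathcal L}\partial_z\big(<\mathcal{H}(\xi(\mathbf{g}))(z),\mathbf{W}(z)>_z\big)\,dz$ after noting that $\mathfrak{K}(A(\cdot),\cdot)=\ll\cdot,\cdot\gg_{\mathbf e}=\int_{\mathcal L}<\cdot,\cdot>_z\,dz$ and that the pointwise pairing $<\cdot,\cdot>_z$ is symmetric and $z$-independent as a bilinear form. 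Thus $\frac{d}{dt}E_T=<\mathcal{H}(\xi(\mathbf{g}))(z),\mathbf{W}(z)>_z\big|_0^L$.

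Finally, this boundary term vanishes: the boundary conditions of Theorem~4.2 give $\partial_z\mathbf{g}(t,0)=\partial_z\mathbf{g}(t,L)=0$, hence $\xi(\mathbf{g})=\theta_L(X(\mathbf{g}))$ vanishes at $z=0,L$, and since $\mathcal{H}$ is a linear endomorphism of $\mathfrak{G}$ fibered over $\mathcal L$ it maps the zero element to zero, so $\mathcal{H}(\xi(\mathbf{g}))(0)=\mathcal{H}(\xi(\mathbf{g}))(L)=0$ (equivalently one may invoke \eqref{eq:bd} directly). Therefore $\frac{d}{dt}E_T(t)=0$, which is the claim.

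The main obstacle I anticipate is establishing the compatibility identity $\partial_t\xi(\mathbf{g})=\partial_z\mathbf{W}+[\xi(\mathbf{g}),\mathbf{W}]$ cleanly and then being careful with the repeated use of the skew-symmetry \eqref{eq:skewsym} versus the symmetry of $A$ with respect to $\mathfrak{K}$: there are several sign-sensitive cancellations, and one must consistently distinguish the pointwise bilinear form $<\cdot,\cdot>_z$ from the integrated metric $\ll\cdot,\cdot\gg_{\mathbf e}$ so that the integration-by-parts bookkeeping produces exactly the boundary term and nothing else. Once those two ingredients are in place, the cancellations are forced and the proof is short.
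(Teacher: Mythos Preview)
Your argument is correct. You differentiate $E_T$, substitute the PDE form of the equation of motion, and then carry out by hand the two cancellations (via the skew-symmetry \eqref{eq:skewsym}) together with an integration by parts in $z$, leaving only the boundary term killed by \eqref{eq:bd}. The compatibility identity $\partial_t\xi(\mathbf g)=\partial_z\mathbf W+[\xi(\mathbf g),\mathbf W]$ is exactly the computation of Lemma~4.1 with the roles of the curve parameter and $z$ exchanged, so that step is fine. One small caveat: your claim that $<\cdot,\cdot>_z$ is ``$z$-independent as a bilinear form'' is not stated in the paper (the form involves $m_z$, $I_z$, $p_0(z)$), but the paper's own integration by parts in Corollary~4.1.1 relies on the same assumption, so you are consistent with the framework used here.

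The paper's proof follows the same idea but stays at the variational level rather than the PDE level: it observes that $\frac{d}{dt}E_T(t)=\ll D_t\mathbf g'(t),\mathbf g'(t)\gg_{\mathbf g(t)}+dU_{\mathbf g(t)}(\mathbf g'(t))$, which is precisely the integrand of \eqref{eq:varform} evaluated at $\mathbf V(t)=\mathbf g'(t)$; since \eqref{eq:varform} together with the Du~Bois--Reymond lemma says this integrand vanishes for every $\mathbf V$, it vanishes in particular for $\mathbf V=\mathbf g'$. Your route unpacks exactly this into the strong form and makes the cancellations explicit, which has the advantage of showing transparently where each structural ingredient (symmetry of $A$, skew-symmetry of $ad$ with respect to $\mathfrak K$, the boundary conditions) is used; the paper's route is shorter but hides these details inside the reference to \eqref{eq:varform}.
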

\begin{proof}
We have $$\frac{d}{dt}E_{T}(t)=\ll D_{t}\mathbf{g}'(t),\mathbf{g}'(t)\gg_{\mathbf{g}(t)}+dU_{\mathbf{g}(t)}(\mathbf{g}'(t))$$ which is 0 from (10) and Dubois-Raymond Lemma.
\end{proof}
\section{Adding a control law}
As we can see in the two terms $\partial_{t}\theta_{L}(\partial_{t}\mathbf{g})$ and $\partial_{z}\mathcal{H}(\theta_{L}(\partial_{z}\mathbf{g}))$ in \eqref{eq:motion}, there is a hyperbolic evolution which agrees with the observation of the evolution of the snake.
However, \eqref{eq:motion} is still related to a passive rod. In order to relate it to a snake, one need to change its passive constitutive law into an active one by adding a model of the control.

At its simplest, such a control can be introduced into the model by replacing, in the right-hand side \eqref{eq:motion}, the passive elastic contribution of $\mathcal{H}(\theta_{L}(\partial_{z}\mathbf{g}))$, by:

\begin{equation}\label{control_law}
\mathcal{H}(\theta_{L}(\partial_{z}\mathbf{g}))+u(\theta_{L}(\partial_{z}\mathbf{g}),\partial_{t}(\theta_{L}(\partial_{z}\mathbf{g})))
\end{equation}
where $u$ represents a local feedback control term. Such a term models the effects of the snake's muscles or the internal actuation of a snake-like robot. Note that in general, this term can include an autonomic feed-forward component modelling the nervous activity of a central pattern generator (CPG), or a feedback on $g$ via the snake's vestibular organs (which inform it of its orientation relative to gravity). More ambitiously, it can define a non-local feedback where $z$ in $u$ is replaced by any other value along the snake's spine. These non-local terms model the effects of a distributed action (muscles connecting distant vertebrae), or tactile sensors covering its skin whose information is processed in a distributed way. At last, the passive contribution $\mathcal{H}(\theta_{L}(\partial_{z}\mathbf{g})$ can be itself replaced by any left-invariant function of the configuration. In this case, the linear elastic constitutive law assumed throughout the previous developments can be changed in any 
nonlinear constitutive law modelling the complex rheology of muscles and tendons.
\begin{remark}
Note that modifying \eqref{eq:motion} by using \eqref{control_law} requires to take the partial derivative of the control with respect to the space variable $z$. Hence from a control theoretical perspective, the latter should be the control function.  
\end{remark}

 \section{Appendix} 
 Here we give an intuitive explanation of the rigorous definitions that we developed on the presentation.\\
 We distinguish between three parts, the first one deals with the insufficiency of classical medium to describe snake robots with rigid motion of the sections. The second one deals with the Riemannian structure of $G$, and the third one recalls basic results about Riemannian geometry on Lie groups.

 \subsection{Insufficiency of classical medium}
A preliminary approach could be the classical one, which models the snake robot as a cylinder $\mathcal{B}=\left\{(x,y,z)\in\mathbb{R}^{3}, x^{2}+y^{2}\leq R^{2}, 0\leq z\leq L\right\}$, a configuration of  the snake robot is an embedding $p:B\rightarrow{\mathbb{R}^{3}}$. \\ \\
The configuration space is then the set of all embedding [11] of $\mathcal{B}$ denoted $Emb(\mathcal{B})$, in order to simplify the study, we impose that the circular sections of the cylinder have a rigid motion, in this case we see that knowing motion of $p(0,0,z)$ for $0\leq z\leq L$, we know motion of $p(x,y,z)$ for $(x,y,z)\in \mathcal{B}$.\\ \\
We call a section the set $S_{z}=\left\{(x,y,w)\in\mathcal{B}, w=z\right\}$, and we see that for a classical medium we have $p(S_{z})\subset (p(0,0,z)')^{\perp}$, and this imply clearly that if $p(0,0,z)$ is curved, the sections that are close have to cut each other, and this would not give an embedding. \\ \\
In order to allow a rigid motion to the "sections", we have to introduced the notion of principal fiber bundle in definition 1.\\ \\
Now we give an intuitive explanation of why we let the "sections" have all possible rigid motion, if we take a rule and fix it on one of the extremities, we see that the degree of freedom of the other extremity is 1. For a cylinder, there is 2 degrees of freedom because of the torsion. A Cosserat medium is a medium where we have the 3 degrees of freedom which are Euler angles and which parameterize the rotation of sections in $SO(3)$.  
\subsection{The kinetic energy of a Cosserat medium}
We have considered a Riemannian structure in the Lie group $G$ that is induced by the kinetic energy of the snake robot, but what is the kinetic energy for a Cosserat medium ? In order to answer this question, it is instructive to recall the definition of kinetic energy for classical medium. \\\\
A classical medium is 3 dimensional compact manifold [11] $\mathcal{B}$ called body, furnished with a measure mass $\mu$, and a configuration is an embedding of $\mathcal{B}$ in $\mathbb{R}^{3}$. The configuration space is then $Emb(\mathcal{B})$ the space of all embeddings of $\mathcal{B}$. \\
We call $\Omega=p(\mathcal{B})$ the actual configuration, and we consider in this manifold the measure $p_{*}\mu$ (it is the unique measure mass on $\Omega$ such that $p$ preserves mass). 
Consider now a motion $p(t)\in Emb(\mathcal{B})$ of the body, We define the Lagrangian velocity by $V(t,X)=\partial_{t}p(t)(X)$ for $X\in\mathcal{B}$, and the Eulerian velocity by $u(t,x)=V(t,p(t)^{-1}(x))$ where $x=p(t)(X)$
$$E_{k}(t)=\frac{1}{2}\int_{\Omega} u^{2}p_{*}\mu,$$
by the changing variable formula we get $$E_{k}(t)=\frac{1}{2}\int_{B}\partial_{t}p^{2}\mu,$$ which have sens even if $p$ is not an embedding. This suggests to use Lagrangian velocity $V$ to compute kinetic energy of our snake robot. \\
We return to the cylinder $\mathcal{B}=\left\{(x,y,z)\in\mathbb{R}^{3},x^{2}+y^{2}\leq R^{2},0\leq z\leq L\right\}$, and for $\mathbf{g}\in G$ consider the smooth mapping $p:\mathcal{B}\rightarrow{\mathbb{R}^{3}}$ defined by 
$$p(x,y,z)=\mathbf{g}(z)(x,y,z).$$
The kinetic energy is then $$E_{k}(t)=\frac{1}{2}\int_{\mathcal{B}}|\partial_{t}\mathbf{g}(t)(z)(x,y,z)|^{2}\rho_{0}(z)dxdydz,$$
and using the sections $S_{z}$ we obtain 
$$E_{k}(t)=\frac{1}{2}\int_{\mathcal{L}}(\int_{S_{z}}|\partial_{t}\mathbf{g}(t)(z)(x,y,z)|^{2}\rho_{0}(z)dxdy)dz.$$
Now we use the fact that $\mathbf{g}(t)\in G$ to show that $$\int_{S_{z}}|\partial_{t}\mathbf{g}(t)(z)(x,y,z)|^{2}\rho_{0}dxdy=2\pi R^{2}\rho_{0}(z)|\mathbf{W}(t)(z)(0,0,z)|^{2}$$ $$+\omega_{\mathbf{W}(t)(z)}.I_{z}(\omega_{\mathbf{W}(t)(z)})dz ,$$
where $\mathbf{W}(t)=\theta_{L}(\partial_{t}\mathbf{g}(t))$. Putting $m_{z}=2\pi R^{2}\rho_{0}(z)$, we obtain $$E_{k}(t)=\frac{1}{2}\int_{\mathcal{L}}m_{z}|\mathbf{W}(t)(z)(0,0,z)|^{2}+\omega_{\mathbf{W}(t)(z)}.I_{z}(\omega_{\mathbf{W}(t)(z)}).$$
Which is exactly the formula \eqref{eq:kinetic}
\subsection{Basics on Riemannian geometry}
A central concept in Riemannian geometry is parallel transportation.\\
Let $(M,g)$ be a Riemannian manifold, for a curve $\gamma:[0,1]\rightarrow{M}$, we take a tangent vector at $\gamma(0)$ and we let him evolve continuously such that it preserves his length and his angle with $\gamma'$, we denote $P(\gamma(1),\gamma(0)):T_{\gamma(0)}M\rightarrow{T_{\gamma(1)}M}$ by the parallel transport. \\
The Riemannian connection is the corresponding structure for whom the flow is the parallel transportation i.e we have $$\nabla_{X}Y(p)=\frac{d}{dt}|_{t=0}P(\gamma(0),\gamma(t))(Y(\gamma(t))),$$
where $\gamma(0)=p$ and $\gamma'(0)=X(p)$.\\ 
It can be shown [1] that this connection is characterised by Koszul formula 
\begin{theorem}
There exists a unique linear connection $\nabla :\Gamma(TM)\times\Gamma(TM)\rightarrow{\Gamma(TM)}$ such that for each $f\in C^{\infty}(M)$
\begin{itemize}
    \item $\nabla_{fX}Y=f\nabla_{X}Y$.
    \item $\nabla_{X}fY=df(Y)+f\nabla_{X}Y$. 
    \item $\nabla_{X}Y-\nabla_{Y}X=[X,Y]$.
     \item $L_{X}g(Y,Z)=g(\nabla_{X}Y,Z)+g(Y,\nabla_{X}Z)$.
\end{itemize}
which is characterized by the Koszul formula 
\begin{equation}\label{eq:koszul}
g(\nabla_{X}Y,Z)=\frac{1}{2}(L_{X}g(Y,Z)+L_{Y}g(X,Z)-L_{Z}g(X,Y)+g([X,Y],Z)+g([Z,X],Y)-g([Y,Z],X)).\end{equation}
\end{theorem}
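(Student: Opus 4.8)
I would prove the statement in the two steps that mirror its two assertions: first that \emph{any} connection $\nabla$ satisfying the four bulleted properties must obey the Koszul identity \eqref{eq:koszul} — which yields uniqueness at once, since $g$ is nondegenerate — and then, conversely, that the formula \eqref{eq:koszul} \emph{defines} a connection enjoying all four properties. The only inputs are the symmetry and nondegeneracy of $g$, the bracket Leibniz rule $[A,fB]=f[A,B]+(Af)B$, and the fact that $L_X$ applied to a function $h$ is $Xh$, with $L_{fX}h=f\,Xh$.

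\textbf{Step 1 (uniqueness and necessity of Koszul).} Assume $\nabla$ satisfies the four properties. Write the metric-compatibility identity (fourth bullet) for the three cyclic rotations of $(X,Y,Z)$:
\begin{align*}
L_X g(Y,Z) &= g(\nabla_X Y,Z)+g(Y,\nabla_X Z),\\
L_Y g(Z,X) &= g(\nabla_Y Z,X)+g(Z,\nabla_Y X),\\
L_Z g(X,Y) &= g(\nabla_Z X,Y)+g(X,\nabla_Z Y).
\end{align*}
I then add the first two and subtract the third; in the resulting right-hand side every covariant derivative $\nabla_A B$ appearing in the "wrong" order is rewritten using the torsion-free identity $\nabla_A B=\nabla_B A+[A,B]$ (third bullet). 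After cancellation (using symmetry of $g$) only $2\,g(\nabla_X Y,Z)$ together with the three bracket terms survive, which is precisely \eqref{eq:koszul}. Since $g$ is nondegenerate, \eqref{eq:koszul} pins down $g(\nabla_X Y,Z)$ for all $Z$, hence $\nabla_X Y$; so at most one such connection exists, and if it exists it is given by Koszul's formula.

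\textbf{Step 2 (existence).} For fixed $X,Y\in\Gamma(TM)$, let $F_{X,Y}\colon\Gamma(TM)\to C^\infty(M)$ send $Z$ to the right-hand side of \eqref{eq:koszul}. The crucial verification is that $F_{X,Y}$ is $C^\infty(M)$-linear in $Z$: replacing $Z$ by $fZ$, every term contributes at most one extra summand, proportional to $Xf$ or to $Yf$ (for the bracket terms via $[A,fZ]=f[A,Z]+(Af)Z$), and these anomalous terms cancel in pairs thanks to the symmetry of $g$. Hence $F_{X,Y}$ is a $1$-form, and nondegeneracy of $g$ (the musical isomorphism $TM\to T^{*}M$) yields a unique vector field $\nabla_X Y$ with $g(\nabla_X Y,Z)=F_{X,Y}(Z)$ for all $Z$. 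It remains to check the four properties: $\mathbb R$-bilinearity in $(X,Y)$ is immediate from the formula; $C^\infty(M)$-linearity in $X$ (first bullet) and the Leibniz rule in $Y$ (second bullet) follow by the same substitution computation ($X\mapsto fX$, resp.\ $Y\mapsto fY$), the anomalous derivative terms either cancelling outright or combining to reproduce exactly the inhomogeneous term $(Xf)g(Y,Z)$; the torsion-free identity follows from $F_{X,Y}(Z)-F_{Y,X}(Z)=g([X,Y],Z)$, obtained by antisymmetrizing \eqref{eq:koszul} in $(X,Y)$ and using $g([A,B],C)=-g([B,A],C)$; and metric compatibility follows from $F_{X,Y}(Z)+F_{X,Z}(Y)=L_X g(Y,Z)$, obtained by symmetrizing \eqref{eq:koszul} in $(Y,Z)$, all six bracket terms pairing off.

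\textbf{Main obstacle.} There is no conceptual difficulty: the whole proof is a disciplined exercise in the bracket Leibniz rule and the symmetry and nondegeneracy of $g$. The step most prone to sign errors — and the one I would carry out first and most carefully — is the $Z\mapsto fZ$ tensoriality check in Step 2, since getting the $(Xf)$- and $(Yf)$-contributions to cancel is exactly what makes $\nabla_X Y$ well defined; once that is secured, the remaining property verifications are routine variants of the same bookkeeping.
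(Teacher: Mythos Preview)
Your argument is the standard textbook proof and is correct in outline and in detail. However, the paper does not actually prove this theorem: it is stated in the appendix as a background result, preceded by the sentence ``It can be shown [1] that this connection is characterised by Koszul formula'', with [1] being Arnold's paper; no proof environment follows the statement. So there is nothing to compare against --- you have supplied a full (and correct) proof where the paper simply quotes the result.
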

The connection $\nabla$ induces a linear connection on vector fields that are tangent to $M$ along a curve $\gamma:[0,1]\rightarrow{M}$, the set of all these vector fields is denoted by $\Gamma(\gamma)$. \\We have the following result.
\begin{proposition}
There is a unique operator $\frac{D}{Dt}:\Gamma(\gamma)\rightarrow{\Gamma(\gamma)}$ such that : 
\begin{itemize}
    \item $\frac{DfV}{Dt}(t)=f'(t)V(t)+f(t)\frac{DV}{Dt}(t)$ for all $f\in C^{\infty}([0,1])$ and $V\in \Gamma(\gamma)$.
    \item If $X\in\Gamma(TM)$ we have $\frac{DX\circ \gamma}{Dt}(t)=\nabla_{\gamma'(t)}X(\gamma(t))$.
    \item We have the following formula \begin{equation}\label{eq:killmet}
    \frac{d}{dt}g_{\gamma(t)}(V(t),W(t))=g_{\gamma(t)}(\frac{DV}{Dt}(t),W(t))+g_{\gamma(t)}(V(t),\frac{DW}{Dt}(t)).\end{equation}
\end{itemize}
\end{proposition}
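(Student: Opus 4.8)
The proof is the classical construction of covariant differentiation along a curve: I would first extract an explicit local formula from the three required properties, which yields uniqueness, and then verify that this formula defines an operator enjoying those properties, which yields existence. I would fix a chart of $M$ around $\gamma(t_{0})$ in which $\nabla$ is represented by a smooth Christoffel datum, i.e. $(\nabla_{X}Y)(x)=DY_{x}(X(x))+\Gamma_{x}(X(x),Y(x))$ with $x\mapsto\Gamma_{x}$ a smooth family of bilinear maps which is symmetric because $\nabla$ is torsion-free. In such a chart a section $V\in\Gamma(\gamma)$ is recorded by its principal part $t\mapsto V(t)$ in the model space, and $\gamma$ by $t\mapsto\gamma(t)$ with velocity $\gamma'(t)$.

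For uniqueness, I would fix $t_{0}$ and write, for $t$ near $t_{0}$, $V(t)=V(t_{0})+(t-t_{0})\,U(t)$ with $U(t)=\int_{0}^{1}V'\big(t_{0}+s(t-t_{0})\big)\,ds$, a smooth section along $\gamma$ with $U(t_{0})=V'(t_{0})$. Let $X$ be the vector field that is constant equal to $V(t_{0})$ on the chart domain, so that $X\circ\gamma$ is the constant section $V(t_{0})$ and $V=X\circ\gamma+f\,U$ with $f(t)=t-t_{0}$. The Leibniz rule (property~1) makes $\tfrac{D}{Dt}$ local, so combining $\mathbb{R}$-linearity, property~1 and the compatibility with $\nabla$ (property~2) gives
$$\frac{DV}{Dt}(t_{0})=(\nabla_{\gamma'(t_{0})}X)(\gamma(t_{0}))+f'(t_{0})\,U(t_{0})+f(t_{0})\,\frac{DU}{Dt}(t_{0})=V'(t_{0})+\Gamma_{\gamma(t_{0})}\big(\gamma'(t_{0}),V(t_{0})\big),$$
using $DX\equiv0$ and $f(t_{0})=0$. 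Hence in every chart $\tfrac{DV}{Dt}$ is forced to be $t\mapsto V'(t)+\Gamma_{\gamma(t)}(\gamma'(t),V(t))$, so at most one such operator exists. (In finite dimension the same conclusion is reached more cheaply by expanding $V$ in a coordinate frame $\partial_{i}$ along $\gamma$; the integral-remainder trick is what lets the argument survive in the infinite-dimensional setting of $G$.)

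For existence I would take the formula just obtained as the definition of $\tfrac{D}{Dt}$ in each chart: it lands in $\Gamma(\gamma)$ by smoothness of $\Gamma$, $\gamma$, $\gamma'$ and $V$, and it is chart-independent because on an overlap both local operators satisfy properties~1 and~2 and so coincide by the uniqueness computation. Property~1 is the product rule together with bilinearity of $\Gamma$; property~2 holds because $(X\circ\gamma)'(t)=DX_{\gamma(t)}(\gamma'(t))$, so the formula reproduces exactly the chart expression of $\nabla_{\gamma'(t)}X$. For property~3, differentiating $t\mapsto g_{\gamma(t)}(V(t),W(t))$ in the chart and cancelling the terms $g(V',W)+g(V,W')$ that appear on both sides reduces the identity to
$$Dg_{\gamma(t)}(\gamma'(t))\big(V(t),W(t)\big)=g_{\gamma(t)}\big(\Gamma_{\gamma(t)}(\gamma'(t),V(t)),W(t)\big)+g_{\gamma(t)}\big(V(t),\Gamma_{\gamma(t)}(\gamma'(t),W(t))\big),$$
which is exactly the chart form of the metric-compatibility axiom $L_{X}g(Y,Z)=g(\nabla_{X}Y,Z)+g(Y,\nabla_{X}Z)$ of $\nabla$ evaluated at $\gamma(t)$ for any vector fields $X,Y,Z$ whose values there are $\gamma'(t),V(t),W(t)$ — this is legitimate since each side of that axiom is, pointwise, a function of the values of $X,Y,Z$ alone. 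The only delicate points are this last reduction and, in the genuinely infinite-dimensional case, the admissibility of the chart manipulations themselves (existence of the Christoffel representation of $\nabla$ and smoothness of the remainder section $U$); these hold in the convenient-calculus framework for manifolds modelled on Fréchet spaces such as $G=C^{\infty}(\mathcal{L},\mathrm{SE}(3))$, and we take them for granted here at the same level at which the existence of $\nabla$ itself was used above.
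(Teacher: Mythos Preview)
The paper does not supply a proof of this proposition: it is stated in the appendix as a recalled standard fact from Riemannian geometry, with no argument beyond the implicit reference to the textbook literature. Your construction is the classical one (local Christoffel formula forced by properties~1--2, then verification of all three properties from that formula and the metric compatibility of $\nabla$), and it is correct; there is nothing in the paper to compare it against beyond noting that you have supplied what the authors chose to omit.
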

Now consider a perturbation $h(s,t)\in M$ of a curve $\gamma(t)\in M$, we have the following formula that is proved by computations in local coordinates.
\begin{lemma}
Let $\gamma(t)\in M$ be a curve, and $h(s,t)\in M$ be a perturbation, so we have 
$$\frac{D}{Dt}\frac{\partial}{\partial s}h(s,t)=\frac{D}{Ds}\frac{\partial}{\partial t}h(s,t).$$
\end{lemma}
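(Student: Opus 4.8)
The plan is to work in local coordinates on $M$ and reduce the identity to the symmetry of the Christoffel symbols of the Levi-Civita connection, which in turn is exactly the torsion-free axiom $\nabla_X Y - \nabla_Y X = [X,Y]$ from Proposition 3 (third bullet of the Koszul characterization). First I would fix a point $(s_0,t_0)$ and a chart $(x^1,\dots,x^n)$ around $h(s_0,t_0)$, writing $h(s,t)=(h^1(s,t),\dots,h^n(s,t))$ in this chart and $\partial_i$ for the coordinate frame, with $\nabla_{\partial_i}\partial_j=\sum_k\Gamma^k_{ij}\,\partial_k$. Then $\partial_s h=\sum_j(\partial_s h^j)\,\partial_i$ and $\partial_t h=\sum_j(\partial_t h^j)\,\partial_i$ are elements of $\Gamma(\gamma)$ (resp. of the analogous space for the $s$-curve), and the defining properties of $\tfrac{D}{Dt}$ in Proposition 3 (linearity over functions, plus $\tfrac{D}{Dt}(X\circ\gamma)=\nabla_{\gamma'}X$) give the standard formula
\[
\frac{D}{Dt}\frac{\partial h}{\partial s}=\sum_k\Big(\frac{\partial^2 h^k}{\partial t\,\partial s}+\sum_{i,j}\Gamma^k_{ij}\,\frac{\partial h^i}{\partial t}\,\frac{\partial h^j}{\partial s}\Big)\partial_k,
\]
and the symmetric expression for $\tfrac{D}{Ds}\tfrac{\partial h}{\partial t}$ obtained by swapping $s\leftrightarrow t$.

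The next step is to compare the two. The second-derivative terms agree because ordinary mixed partial derivatives of the real-valued functions $h^k$ commute. The remaining double sums agree term-by-term once one observes $\Gamma^k_{ij}=\Gamma^k_{ji}$: indeed, $\nabla_{\partial_i}\partial_j-\nabla_{\partial_j}\partial_i=[\partial_i,\partial_j]=0$ since coordinate vector fields commute, which by the third axiom of the connection forces the lower indices of the Christoffel symbols to be symmetric. Hence the two coordinate expressions coincide, and since the chart was arbitrary the identity holds globally on $M$.

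I would close by noting the coordinate-free shortcut that works whenever $h$ is an immersion: one extends $\partial_s h$ and $\partial_t h$ to local vector fields $X,Y$ with $[X,Y]=0$ (they are pushforwards of the commuting coordinate fields $\partial_s,\partial_t$) and applies $\nabla_X Y-\nabla_Y X=[X,Y]=0$ directly. Because $h$ need not be an immersion in our setting, the local-coordinate computation above is the robust route, and it is the one I would present.

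The main obstacle is purely formal rather than conceptual: $M=C^\infty(F\mathcal{L},F\mathbb{R}^3)$ is infinite-dimensional, so one must either justify that the finite-dimensional chart computation transfers verbatim (it does, because the Levi-Civita connection on the weak Riemannian manifold $G$, through which all the dynamics is phrased, is characterized by the same Koszul formula \eqref{eq:koszul} and the same torsion-free identity), or simply carry out the argument intrinsically on $G$ using $\theta_L$ to move everything to $\mathfrak{G}=C^\infty(\mathcal{L},\mathfrak{se}(3))$, where the bracket of the reduced variation fields is controlled by $\eqref{eq:skewsym}$ and the commutation of $\partial_s$ and $\partial_t$. Either way the substantive input is only the symmetry of the connection, already available from Proposition 3.
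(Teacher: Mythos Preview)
Your approach is correct and is exactly what the paper does: it states the lemma and simply remarks that it ``is proved by computations in local coordinates,'' without writing out the details you supply. One small note: in this appendix section $M$ denotes a generic Riemannian manifold, not the specific configuration space $C^\infty(F\mathcal{L},F\mathbb{R}^3)$, so your closing paragraph on the infinite-dimensional subtlety, while a reasonable aside for the application, is not needed for the lemma as stated; also watch the index in your expression for $\partial_s h$ (the basis vector should be $\partial_j$, not $\partial_i$).
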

\subsection{Lie Groups and Riemannian geometry}
Now let suppose that a Lie group $G$ furnished with a Riemannian metric that is left invariant, we denote by $\mathbb{I}=g_{e}$ the value of the Riemannian metric at the neutral element.\\ \\
When the metric is left invariant [2] [8], the Levi Civita connection is also left invariant.
\begin{lemma}
let $G$ be a Lie group furnished with a left invariant metric that is $\mathbb{I}$ at the neutral element, then $\nabla$ is left invariant and we have explicitly 
\begin{equation}\label{eq:lefti}
    \nabla_{V_{L}}W_{L}=(\frac{1}{2}[V,W]-\mathbb{I}^{-1}ad_{V}^{*}\mathbb{I}W-\mathbb{I}^{-1}ad_{W}^{*}\mathbb{I}V)_{L},\end{equation}
where $V,W\in\mathfrak{g}$ and $V_{L}(g)=T_{e}L_{g}(V)$.
\end{lemma}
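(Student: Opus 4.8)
The plan is to compute $\nabla_{V_L} W_L$ for left-invariant vector fields $V_L, W_L$ directly from the Koszul formula \eqref{eq:koszul}, exploiting two structural facts: first, the inner product $g_p(V_L(p), W_L(p)) = \langle V, W\rangle$ is constant in $p$ because the metric is left invariant; second, the Lie bracket of left-invariant vector fields is again left-invariant, with $[V_L, W_L] = [V,W]_L$. Once I establish that $\nabla_{V_L}W_L$ is a left-invariant vector field, it suffices to identify its value at $e$, which is exactly what the formula \eqref{eq:lefti} asserts.

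First I would record the consequence of left-invariance of the metric: for any $V, W \in \mathfrak{g}$, the function $p \mapsto g_p(V_L(p), W_L(p))$ equals the constant $\mathbb{I}(V)(W) = \langle V, W\rangle_{\mathbf{e}}$, hence $L_{X} g(V_L, W_L) = 0$ for every vector field $X$, in particular for $X = V_L, W_L, Z_L$. Plugging $X = V_L$, $Y = W_L$, $Z = Z_L$ into \eqref{eq:koszul}, the first three derivative terms vanish and I am left with
$$
g(\nabla_{V_L}W_L, Z_L) = \tfrac12\big( g([V_L,W_L], Z_L) + g([Z_L,V_L], W_L) - g([W_L,Z_L], V_L)\big).
$$
Using $[V_L,W_L]=[V,W]_L$ etc., and evaluating at an arbitrary point (all terms are constant functions), this reads, at the level of the Lie algebra,
$$
\langle \nabla_{V_L}W_L(e), Z\rangle = \tfrac12\big(\langle [V,W], Z\rangle + \langle [Z,V], W\rangle - \langle [W,Z], V\rangle\big).
$$

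Next I would rewrite the last two terms using the coadjoint operator. By definition of $ad^*$ with respect to $\langle\cdot,\cdot\rangle_{\mathbf{e}}$ and the identification $\mathbb{I}:\mathfrak{g}\to\mathfrak{g}^*$, one has $\langle [Z,V], W\rangle = \langle ad_Z V, W\rangle = -\langle ad_V Z, W\rangle = -\langle Z, \mathbb{I}^{-1} ad_V^* \mathbb{I} W\rangle$, and similarly $-\langle [W,Z], V\rangle = \langle ad_W Z, V\rangle = \langle Z, \mathbb{I}^{-1} ad_W^* \mathbb{I} V\rangle \cdot(-1)$ after one more transposition; keeping careful track of signs gives
$$
\langle \nabla_{V_L}W_L(e), Z\rangle = \big\langle \tfrac12[V,W] - \mathbb{I}^{-1}ad_V^*\mathbb{I}W - \mathbb{I}^{-1}ad_W^*\mathbb{I}V,\; Z\big\rangle
$$
for all $Z$, whence $\nabla_{V_L}W_L(e)$ equals the bracketed element. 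Finally I would argue that $\nabla_{V_L}W_L$ is globally left-invariant: because the same computation carried out at any $L_g$-translate produces the identical Lie-algebra expression (all the $g$-functions involved being constant), and because the Levi-Civita connection is natural under isometries so that $L_g$ pushes $\nabla_{V_L}W_L$ to $\nabla_{(L_g)_*V_L}(L_g)_*W_L = \nabla_{V_L}W_L$. Hence $\nabla_{V_L}W_L = \big(\tfrac12[V,W] - \mathbb{I}^{-1}ad_V^*\mathbb{I}W - \mathbb{I}^{-1}ad_W^*\mathbb{I}V\big)_L$, which is \eqref{eq:lefti}.

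The main obstacle is purely bookkeeping: getting the signs and the placement of $ad^*$ versus $ad$ correct when transposing each Koszul term, and making sure the transpose is taken with respect to $\langle\cdot,\cdot\rangle_{\mathbf{e}}$ consistently with the convention $\mathbb{I}(V)(W)=\langle V,W\rangle_{\mathbf{e}}$ used in the paper. The one genuinely non-routine point to justify carefully — especially in infinite dimensions, since here $G=C^\infty(\mathcal{L},\mathrm{SE}(3))$ — is that $ad_V^*$ is well defined as an operator $\mathfrak{g}\to\mathfrak{g}$ (equivalently, that $\mathbb{I}$ is invertible with the required mapping properties); this is where the explicit description of $A=\mathbb{I}^{-1}$-type operators given earlier via the inertia data $m_z, I_z$ is used, and where one should at least remark that all pairings are interpreted weakly if necessary.
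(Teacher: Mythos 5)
Your plan is exactly the paper's proof: the paper's own argument is a one-sentence sketch of precisely this Koszul computation (derivative terms vanish by left invariance of the metric, bracket terms are left invariant since $[V_L,W_L]=[V,W]_L$), so in approach there is nothing to distinguish. Your additional remarks --- that left invariance of $\nabla_{V_L}W_L$ also follows from naturality of the Levi-Civita connection under the isometries $L_g$, and that in this infinite-dimensional setting one must check that $ad_V^*$ descends to an operator $\mathfrak{g}\to\mathfrak{g}$ through $\mathbb{I}$ (which the paper's explicit inertia operator $A$ provides) --- are both correct and worth keeping.

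There is, however, one concrete place where your bookkeeping does not close, and it is exactly the step you wave through with ``keeping careful track of signs gives\dots''. Carrying out the substitutions you describe, the Koszul formula yields
\begin{equation*}
\ll \nabla_{V_L}W_L(e),Z\gg_{\mathbf e}
=\tfrac12\Bigl(\ll[V,W],Z\gg_{\mathbf e}-\ll\mathbb{I}^{-1}ad_V^*\mathbb{I}W,Z\gg_{\mathbf e}-\ll\mathbb{I}^{-1}ad_W^*\mathbb{I}V,Z\gg_{\mathbf e}\Bigr),
\end{equation*}
i.e.\ the factor $\tfrac12$ multiplies \emph{all three} terms, not only the bracket. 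This is the standard Arnold--Milnor formula, it is what the paper itself uses later (the proof of the Euler--Arnold proposition writes $\frac12 v^iv^j([e_i,e_j]-\mathbb{I}^{-1}ad_{e_i}^*\mathbb{I}e_j-\mathbb{I}^{-1}ad_{e_j}^*\mathbb{I}e_i)$, and only with the overall $\tfrac12$ does $V=W$ give $\nabla_{V_L}V_L=-(\mathbb{I}^{-1}ad_V^*\mathbb{I}V)_L$ as in \eqref{eq:eulerarnold}), and it contradicts the display \eqref{eq:lefti} as parenthesized. So the honest conclusion of your computation is a \emph{correction} of the lemma's displayed equation, not a derivation of it; you should either state the result with the overall $\tfrac12$ or explicitly flag the misplaced parenthesis, rather than assert that careful sign-tracking produces the formula as printed.
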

\begin{proof}
We use Koszul formula \eqref{eq:koszul} for left invariant vector fields $V_{L}, W_{L}, U_{L}$ to show that $\nabla_{V_{L}}W_{L}$ is left invariant, using the fact that the Lie bracket commutes with the pullback and that the metric is left invariant. \end{proof}

This results allow us to simplify equations of motion of the snake robot

\begin{proposition}
Let $g:[0,1]\rightarrow{G}$ be a curve, 
\begin{equation}\label{eq:eulerarnold}
    \theta_{L}(\frac{Dg'}{Dt}(t))=\partial_{t}\theta_{L}(g'(t))-\mathbb{I}^{-1}ad_{\theta_{L}(g'(t))}^{*}\mathbb{I}\theta_{L}(g'(t)).\end{equation}
\end{proposition}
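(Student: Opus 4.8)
The plan is to transport the covariant acceleration to the Lie algebra via the body velocity and then read the answer off the explicit left-invariant form of the connection, equation \eqref{eq:lefti}. Write $\mathbf{W}(t):=\theta_{L}(g'(t))\in\mathfrak{G}$, so that $g'(t)$ is the value at $g(t)$ of the left-invariant vector field $(\mathbf{W}(t))_{L}$; in particular $\gamma:=g$ satisfies $\gamma'(t)=(\mathbf{W}(t))_{L}(\gamma(t))$, and the claim is that $\theta_L$ of the covariant acceleration of $\gamma$ equals $\mathbf{W}'(t)-\mathbb{I}^{-1}ad^{*}_{\mathbf{W}(t)}\mathbb{I}\,\mathbf{W}(t)$.

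The key step is to differentiate, at a fixed time $t_{0}$, the vector field $t\mapsto g'(t)=(\mathbf{W}(t))_{L}(\gamma(t))$ along $\gamma$ using the operator $\frac{D}{Dt}$ from the proposition characterising the covariant derivative along a curve. Taylor-expanding $\mathbf{W}(t)=\mathbf{W}(t_{0})+(t-t_{0})\mathbf{W}'(t_{0})+o(t-t_{0})$ and using the $\mathbb{R}$-linearity and the Leibniz property of $\frac{D}{Dt}$ (the $o(t-t_{0})$ remainder, being $C^{1}$-flat at $t_{0}$, contributes nothing to $\frac{D}{Dt}$ there, while the Leibniz rule applied to the middle term $(t-t_{0})(\mathbf{W}'(t_{0}))_{L}\circ\gamma$ leaves only $(\mathbf{W}'(t_{0}))_{L}(\gamma(t_{0}))$ at $t=t_{0}$), one obtains
$$\frac{Dg'}{Dt}(t_{0})=\frac{D\big[(\mathbf{W}(t_{0}))_{L}\circ\gamma\big]}{Dt}(t_{0})+(\mathbf{W}'(t_{0}))_{L}(\gamma(t_{0})).$$
The second summand yields $\mathbf{W}'(t_{0})=\partial_{t}\theta_{L}(g'(t_{0}))$ after applying $\theta_{L}$. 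For the first summand, the relation $\frac{D(X\circ\gamma)}{Dt}=\nabla_{\gamma'}X$ from the same proposition together with tensoriality of $\nabla$ in its first slot gives
$$\frac{D\big[(\mathbf{W}(t_{0}))_{L}\circ\gamma\big]}{Dt}(t_{0})=\nabla_{\gamma'(t_{0})}(\mathbf{W}(t_{0}))_{L}=\big(\nabla_{(\mathbf{W}(t_{0}))_{L}}(\mathbf{W}(t_{0}))_{L}\big)(\gamma(t_{0})),$$
and substituting \eqref{eq:lefti} with $V=W=\mathbf{W}(t_{0})$: the bracket term vanishes since $[\mathbf{W}(t_{0}),\mathbf{W}(t_{0})]=0$, and the two remaining terms combine into $-\mathbb{I}^{-1}ad^{*}_{\mathbf{W}(t_{0})}\mathbb{I}\,\mathbf{W}(t_{0})$. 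Applying $\theta_{L}$ and collecting the two contributions, $\theta_{L}\big(\frac{Dg'}{Dt}(t_{0})\big)=\partial_{t}\theta_{L}(g'(t_{0}))-\mathbb{I}^{-1}ad^{*}_{\theta_{L}(g'(t_{0}))}\mathbb{I}\,\theta_{L}(g'(t_{0}))$; since $t_{0}$ is arbitrary this is \eqref{eq:eulerarnold}.

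The only point needing care — and what I expect to be the main obstacle in a fully rigorous write-up — is the differential-calculus bookkeeping in the infinite-dimensional group $G$: that $\frac{D}{Dt}$, being a first-order operator along $\gamma$, annihilates at $t_{0}$ a vector field along $\gamma$ that vanishes to first order there, and that the left-invariant-field decomposition (equivalently, a frame expansion $\mathbf{W}(t)=\sum_{i}W^{i}(t)e_{i}$, after which the computation can be carried out term by term) is legitimate in whatever functional-analytic setting one fixes for $G=C^{\infty}(\mathcal{L},\mathrm{SE}(3))$. One should also double-check the numerical factors in \eqref{eq:lefti} so that the two coadjoint contributions in $\nabla_{\mathbf{W}_{L}}\mathbf{W}_{L}$ add up to a single copy of $\mathbb{I}^{-1}ad^{*}_{\mathbf{W}}\mathbb{I}\mathbf{W}$, as required by the statement.
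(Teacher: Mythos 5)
Your proof is correct and follows essentially the same route as the paper's: both decompose $\frac{Dg'}{Dt}$ into a ``component derivative'' term yielding $\partial_t\theta_L(g'(t))$ plus a connection term evaluated via the left-invariant formula \eqref{eq:lefti}, the only difference being that the paper expands $\theta_L(g'(t))=v^i(t)e_i$ in a basis and applies the Leibniz rule, whereas you freeze $\mathbf{W}$ at $t_0$ by a Taylor argument (a basis-free variant that is arguably cleaner for the infinite-dimensional $\mathfrak{G}$). Your caveat about the numerical factor is well taken: the Koszul computation gives $\nabla_{V_L}W_L=\tfrac{1}{2}\bigl([V,W]-\mathbb{I}^{-1}ad_V^{*}\mathbb{I}W-\mathbb{I}^{-1}ad_W^{*}\mathbb{I}V\bigr)_L$ with the $\tfrac{1}{2}$ on all three terms, which is what the paper's own proof of the proposition actually uses and what makes the two coadjoint contributions sum to a single $-\mathbb{I}^{-1}ad_{\mathbf{W}}^{*}\mathbb{I}\mathbf{W}$, so the statement of \eqref{eq:lefti} contains a misplaced factor.
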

\begin{proof}
Let $e_{i}$ be a basis of $\mathfrak{g}$, and let $v^{i}$ be such that $$\theta_{L}(g'(t))=v^{i}(t)e_{i},$$
so we get $$g'(t)=v^{i}(t)(e_{i})_{L},$$
this gives $$\frac{Dg'}{Dt}(t)=\frac{dv^{i}}{dt}(e_{i})_{L}+v^{i}(t)\nabla_{g'(t)} (e_{i})_{L}(g(t)),$$
by properties of Levi Civita connection we obtain $$\theta_{L}(\frac{Dg'}{Dt}(t))=\frac{dv^{i}}{dt}(t)e_{i}+\frac{1}{2}v^{i}(t)v^{j}(t)([e_{i},e_{j}]-\mathbb{I}^{-1}ad_{e_{i}}^{*}\mathbb{I}e_{j}-\mathbb{I}^{-1}ad_{e_{j}}^{*}\mathbb{I}e_{i}),$$ 
and the conclusion follows.
\end{proof}
\section{Notations and definitions}
We give here tools that is needed to understand the presentation that follows, we begin by basic definitions in abstract algebra and linear algebra.
\begin{definition}{Algebraic tools}
\begin{enumerate}[label=\arabic*)]
    \item Let $G,H$ be two groups, we call a morphism of group from $G$ to $H$ a map that preserves the group structure i.e $f:G\rightarrow{H}$ such that $f(x.y)=f(x).f(y)$ $\forall{x,y\in G}$.
    \item Let $X$ be a set, we call $S(X)$ the group of permutations of $X$, it consists on the maps $f:X\rightarrow{X}$ that are bijective, the composition law is the usual composition of functions, the neutral element being the identity mapping of $X$.
    \item A group action of a group $G$ on a set $X$ is a map $*:G\times X\rightarrow{X}$ that satisfies :\\
    1. $\forall{g,h\in G}$ $g*(h*x)=(g.h)*x$.\\
    2. $\forall{x\in X}$ $e*x=x$.\\
    We associate to each group action a morphism of group $$\phi:G\rightarrow{S(X)}$$ defined by $\phi(g)(x)=g*x$.\\ 
    The action is free if this morphism is injective.\\
    The action is transitive if $\forall{x,y\in X}$ there exists $g\in G$ such that $g*x=y$.\\
    When a group action is free and transitive, we can identify $X$ to $G$ by the following way. We fix $x\in X$, and we consider the bijection $\phi_{x}:g\in G\rightarrow{g*x\in X}$. The element $x\in X$ plays the role of the reference configuration in mechanics. 
    \item A linear map from a vector space $V$ to $V$ is a map $f:V\rightarrow{V}$ such that $$f(\alpha.x+\beta.y)=\alpha.f(x)+\beta.f(y).$$
    \item A scalar product in vector space $V$ is a function $(,):V\times V\rightarrow{\mathbb{R}}$ such that :\\
    1. $\forall{x,y,z\in V}$ $\forall{\alpha,\beta\in\mathbb{R}}$ we have $(\alpha x+\beta y,z)=\alpha(x,z)+\beta(y,z)$.\\ 
    2. $\forall{x\in V-{0}}$, $(x,x)>0$.\\
    3. $\forall{x,y\in V}$, $(x,y)=(y,x)$.
\item A Lie algebra is a vector space $\mathfrak{g}$ furnished with a Lie bracket $[,]:\mathfrak{g}\times\mathfrak{g}\rightarrow{\mathfrak{g}}$ that satisfies :\\
1. $\forall{x,y,z\in\mathfrak{g}}$, $\forall{\alpha,\beta\in\mathbb{R}}$, $[\alpha.x+\beta.y,z]=\alpha.[x,z]+\beta.[y,z]$.\\
2. $\forall{x,y\in\mathfrak{g}}$, $[x,y]=-[y,x]$.\\
3. $\forall{x,y,z\in\mathfrak{g}}$, $[x,[y,z]]+[y,[z,x]]+[z,[x,y]]=0$.
\item In a vector space $V$ furnished with a bilinear symmetric form $\mathfrak{k}$, we denote by $L_{skew}(V,\mathfrak{k})$ the space of skew-symmetric endomorphism with respect to $\mathfrak{k}$, that is $f:V\rightarrow{V}$ such that \begin{equation}
    \label{eq:La}
    \mathfrak{k}(f(u),v)=-\mathfrak{k}(u,f(v)).
\end{equation}
\end{enumerate}
\end{definition}
Now we introduce some elementary differential geometric tools and combine them with the algebraic tools, a good reference is \cite{Lee}.
\begin{definition}{Differential geometric tools}
\begin{enumerate}[label=\arabic*)]
   
\item An embedding of a manifold $B$ is a map $p:B\rightarrow{\mathbb{R}^{3}}$ that is smooth and is a diffeomorphism on its image $p(B)$.
   
\item A submersion from $M$ to $N$ is a smooth map $\phi:M\rightarrow{N}$ such that the tangent linear map $d\phi_{x}:T_{x}M\rightarrow{T_{\phi(x)}N}$ is surjective for each $x\in M$. \\A fundamental result of submersions is that $f^{-1}(\left\{a\right\})$ is a submanifold of $M$ of dimension $dim(M)-dim(N)$ for each $a\in f(M)$, and we have $T_{x}f^{-1}(\left\{a\right\})=Ker(df_{x})$ $\forall{x\in f^{-1}(\left\{a\right\})}$. 
\item For two manifolds $M,N$ and a diffeomorphism $\phi:M\rightarrow{N}$, for $Y\in \Gamma(TN)$, we define the pullback by $\phi^{*}Y\in\Gamma(TM)$ $$\phi^{*}Y(x)=d\phi^{-1}_{\phi(x)}(Y\circ\phi(x)).$$
\item Let $M$ be a manifold and $f\in C^{\infty}(M)$ and $X\in\Gamma(TM)$, the lie derivative of $f$ on the direction of $X$ is $$L_{X}f(x)=d_{x}f(X(x))$$ and for two elements $X,Y\in\Gamma(TM)$, we define the Lie bracket of vector fields $[X,Y]$ as the unique vector field such that $$L_{[X,Y]}f=L_{X}L_{Y}f-L_{Y}L_{X}f,$$ for each $f\in C^{\infty}(M)$.  

\item Let $\phi:M\rightarrow{N}$ be a diffeomorphism, we have that for all vector fields $X,Y\in\Gamma(TN)$ $$\phi^{*}[X,Y]=[\phi^{*}X,\phi^{*}Y].$$
\item In a manifold $M$ $\delta x$ is an element of $T_{x}M$ for $x\in M$.

\item The adjoint representation of $G$ is defined by 
$$Ad_{g}:\mathfrak{g}\rightarrow{\mathfrak{g}},$$ $$v\rightarrow{d(i_{g})_{e}(v)} ,$$ where $i_{g}$ is the interior automorphism $i_{g}(h)=ghg^{-1}$.\\
In fact $Ad:G\rightarrow{GL(\mathfrak{g})}$ is a morphism of group. 
\item The adjoint representation of $\mathfrak{g}$ is the map $$ad_{u}:\mathfrak{g}\rightarrow{\mathfrak{g}},$$ defined by  $$ad_{u}v=dAd_{e}(u)(v)$$ for each $u,v\in\mathfrak{g}$, in fact $ad:u\in\mathfrak{g}\rightarrow{ad_{u}\in L(\mathfrak{g})}$.\\
The Lie bracket that endows $\mathfrak{g}$ with a structure of Lie algebra is $[u,v]:=ad_{u}v$. 
\item For a compact manifold, the Lie algebra of the group $G=\mathrm{Diff}(M)$ is $\Gamma(TM)$, the Lie bracket introduced in the precedent item is the Lie bracket of vector fields. 
\item We can define the Lie bracket of two elements in $\mathfrak{g}$ by the following process :\\
1. We extend $u,v\in\mathfrak{g}$ on $TG$ such that it remains left invariant i.e we consider $u_{L},v_{L}$ two vector fields defined by $u_{L}(g)=d(L_{g})_{e}(u)$ and same for $v$. \\
2. We compute the Lie bracket of $u_{L},v_{L}$ and evaluate at $e$, $[u,v]=[u_{L},v_{L}](e)$.\\
By the fact that pullback commute with Lie bracket, the element $[u,v]$ is well defined, and it can be easily proved that $[u,v]=ad_{u}v$.
\item In a Lie group $G$ we call the left Maurer-Cartan form the map $$\theta_{L}(v)=(d(L_{g})_{e})^{-1}(v),$$ where $L_{g}$ is the left multiplication by $g$. The Maurer-Cartan form is very useful to avoid computations on a nonlinear space that is the group $G$ and instead do them in $\mathfrak{g}$ that is a linear space. 
\item An action of a Lie group $G$ on a manifold $M$ is a morphisme of groups $\phi:G\rightarrow{\mathrm{Diff}(M)}$ from $G$ to the group of diffeomorphisms of $M$, we say that the action of free if this morphisme is injective, and is transitive if for all $x,y\in M$ there is $g\in G$ such that $\phi(g)(x)=y$.\\
If the action is free and transitive, we can identify $M$ to $G$. This identification depends on a fixed $x\in M$ and is done by the diffeomorphism $$\phi_{x}:g\in G\rightarrow{\phi(g)(x)\in M}.$$ The element $x\in M$ will play the role of reference configuration in mechanics.
\end{enumerate}
\end{definition}
\begin{definition}{Examples and applications}

\end{definition}
\begin{enumerate}[label=\arabic*)]
    \item We denote by $\mathbb{R}^{3}$ the euclidean space of dimension 3 furnished with the canonical scalar product $(,)$ and the vector product $\times$.
\item We denote by $M_n(\mathbb{R})$ the set of square matrices of order $n$ and by $M^\top$ the transpose of $M$ defined as: $M^\top_{i,j}=M_{j,i}$ for all $i,j\in \lbrace 1,\ldots,n\rbrace$. The matrix $I_n$ denotes the identity matrix.
\item We denote by $S_{n}(\mathbb{R})$ the set of symmetric matrices, i.e matrices such that $M^\top=M$.
\item We denote by $A_{n}(\mathbb{R})$ the set of skew symmetric matrices, i.e matrices such that $M^\top=-M$.
\item We denote by $I_{3}$ denote the neutral element of multiplication, i.e the matrix  $$I_{3}=\begin{pmatrix}1&0&0\\0&1&0\\0&0&1
\end{pmatrix}.$$
\item We denote by $\mathrm{SO}(3)=\left\{O\in M_{3}(\mathbb{R}) \mid O^{\top}.O=I_{3},\mathrm{det}(O)=1\right\}$, the elements of this set are rotations, it is a group for multiplication of matrices, and the multiplication of two consecutive rotations is the composition of the rotations.\\ 
The structure of manifold can be proved by observing that the map $$f:A\in GL_{3}(\mathbb{R})\rightarrow{A.A^\top\in S_{3}(\mathbb{R})},$$ 
is a submersion, we have $df_{A}(H)=A.H^\top+H.A^\top$ and then $T_{I_{3}}SO(3)=A_{3}(\mathbb{R})$. 
\item We denote by $\mathrm{SE}(3):=\left\{\begin{pmatrix}
R & u\\ 0 & 1
\end{pmatrix}\in M_4(\mathbb{R})\left|R \in \mathrm{SO}(3),\: u\in \mathbb{R}^3\right.\right\}$. It is a group for the composition law defined as follows:
$$\begin{pmatrix}R_{1}&u_{1}\\0&1\end{pmatrix}\begin{pmatrix}R_{2}&u_{2}\\0&1\end{pmatrix}=\begin{pmatrix}R_{1}R_{2}&u_{1}+R_{1}u_{2}\\0&1\end{pmatrix}.$$
The elements of this set are displacements. The composition law is the combination of two consecutive displacements, it's neutral element is $I_{4}$. \\
Any $g\in \mathrm{SE}(3)$ is denoted by $g:=\begin{pmatrix} R_g & u_g\\ 0 & 1 \end{pmatrix}$. A displacement can be seen as an application $g:\mathbb{R}^3\rightarrow \mathbb{R}^3$, which is an affine map defined as: $g(x):=R_g x + u_g$.
\item We denote by $\mathfrak{so}(3)$ the set of all skew-symmetric matrices furnished with the commutator $[A,B]=AB-BA$ of matrices, it is the Lie algebra of $SO(3)$, it is isomorphic to $\mathbb{R}^{3}$ by the canonical isomorphism $$c: (\mathfrak{so}(3),[,])\rightarrow{(\mathbb{R}^{3},\times)},$$ defined by $A.x=c(A)\times x$ for all $x\in\mathbb{R}^{3}$, for which inverse is $j:\mathbb{\mathbb{R}}^{3}\rightarrow{\mathfrak{so}(3)}$ is explicitly $$j\begin{pmatrix}\omega_{1}\\ \omega_{2}\\ \omega_{3}\end{pmatrix}=\begin{pmatrix}0& -\omega_{3}& \omega_{1}&\\ \omega_{3} &0 &-\omega_{2}  \\ -\omega_{1}& \omega_{2}& 0 \end{pmatrix}.$$ 
\item We denote by $\mathfrak{se}(3)=\left\{\begin{pmatrix}j(\omega)&v\\0&0\end{pmatrix}, \omega,v\in\mathbb{R}^{3}\right\}$, it is the Lie algebra of $SE(3)$, for $V\in\mathfrak{se}(3)$ we denote by $\omega_{V}$ the unique vector such that $V=\begin{pmatrix}j(\omega_{V})&v\\0&0\end{pmatrix}$ for some $v\in\mathbb{R}^{3}$.
\item The adjoint representation of $SE(3)$ is given by 
$$Ad_{g}\begin{pmatrix}j(\omega_{V})&v\\0&0\end{pmatrix}=\begin{pmatrix}Rj(\omega_{V})R^{-1}&-Rj(\omega_{V})R^{-1}u+Rv\\0&0\end{pmatrix},$$
where $g=\begin{pmatrix}R&u\\0&1\end{pmatrix}$.
\item The Lie bracket on $\mathfrak{se}(3)$ is given by 
$$[V,W]=ad_{V}W:=\begin{pmatrix}j(\omega_{V}\times\omega_{W})&j(\omega_{V})w-j(\omega_{W})v\\0&0\end{pmatrix}.$$
\item The Klein form $\mathfrak{k}:\mathfrak{se}(3)\times\mathfrak{se}(3)\rightarrow{\mathbb{R}}$ is defined by $$\mathfrak{k}(V,W)=v.\omega_{W}+w.\omega_{V},$$
by basic computations we have $$\mathfrak{k}(Ad_{g}(V),Ad_{g}(W))=\mathfrak{k}(V,W),$$ for $g\in \mathrm{SE}(3)$ and $V,W\in\mathfrak{se}(3)$.
The Klein form will be useful to simplify the equations of motion introducing inertia operator by the mean of kinetic energy. 
\item The tangent space of $\mathrm{SE}(3)$ at $g\in \mathrm{SE}(3)$ is given by $$T_{g}\mathrm{SE}(3)=\left\{\begin{pmatrix}R.j(\omega)&v\\0&0\end{pmatrix},\omega,v\in\mathbb{R}^{3}\right\},$$
$$d(L_{g})_{e}\begin{pmatrix}j(\omega)&v\\0&0\end{pmatrix}=\begin{pmatrix}R.j(\omega)& Rv\\0&0\end{pmatrix},$$

\item In the particular case of $\mathrm{SE}(3)$, the Maurer-Cartan form is 
$$\theta_{L}\begin{pmatrix}R.j(\omega)&v\\0&0\end{pmatrix}=\begin{pmatrix}j(\omega)&R^{-1}v\\0&0\end{pmatrix}.$$

\end{enumerate}

\end{document}